\newtheorem{thm}{Theorem}
\newtheorem{defn}{Definition}
\newtheorem{cor}{Corollary}
\newtheorem{oss}{Remark}
\begin{document}

\title{\large\textbf{GLOBAL EXISTENCE OF WEAK SOLUTIONS
TO A \\ NONLOCAL CAHN-HILLIARD-NAVIER-STOKES SYSTEM}}

\author{
{\sc Pierluigi Colli}\\
Dipartimento di Matematica
\\Universit\`{a} degli Studi di Pavia\\Pavia I-27100, Italy\\
\textit{pierluigi.colli@unipv.it}
\\
\\
{\sc Sergio Frigeri}\\
Dipartimento di Matematica
\\Universit\`{a} degli Studi di Pavia\\Pavia I-27100, Italy\\
\textit{sergio.frigeri@unipv.it}
\\
\\
{\sc Maurizio Grasselli}\\
Dipartimento di Matematica\\
Politecnico di Milano\\
Milano I-20133, Italy \\
\textit{maurizio.grasselli@polimi.it}}

\date{}

\maketitle
%%%%%%%%%%%%%%%%%%%%%%%%%%%%%%%%%%%%%%%%%%%%%%%%%%%%%%%%%%%%%%%%%%%%%%%%%%%%%%%%%%%%%%%%%%%%%%%%%%%%%%%%%%%%%%%%%%%%%%%%%%%
\begin{abstract}
A well-known diffuse interface model consists of the Navier-Stokes
equations nonlinearly coupled with a convective Cahn-Hilliard type
equation. This system describes the evolution of an incompressible
isothermal mixture of binary fluids and it has been investigated by
many authors. Here we consider a variant of this model where the
standard Cahn-Hilliard equation is replaced by its nonlocal version.
More precisely, the gradient term in the free energy functional is
replaced by a spatial convolution operator acting on the order
parameter $\varphi$, while the potential $F$ may have any
polynomial growth. Therefore the coupling with the Navier-Stokes
equations is difficult to handle even in two spatial dimensions because
of the lack of regularity of $\varphi$. We establish the global
existence of a weak solution. In the two-dimensional case we also
prove that such a solution satisfies the energy identity and a
dissipative estimate, provided that $F$ fulfills a suitable coercivity
condition.
\\ \\
\noindent \textbf{Keywords}: Navier-Stokes equations, nonlocal
Cahn-Hilliard equations, incompressible binary fluids, existence of
weak solutions.
\\
\\
\textbf{AMS Subject Classification}: 35Q30, 45K05, 76T99.
\end{abstract}

\section{Introduction}
\setcounter{equation}{0}
A well-known model which describes the evolution of an
incompressible isothermal mixture of two immiscible fluids is the
so-called model H (see \cite{HH,GPV}, cf. also \cite{D,LT,M} and
references therein). This is a diffuse-interface model (cf. \cite{AMW})
in which the sharp interface separating the two fluids (e.g., oil and
water) is replaced by a diffuse one by introducing an order parameter
$\varphi$. The dynamics of $\varphi$, which represents the (relative)
concentration of one of the fluids (or the difference of the two
concentrations), is governed by a Cahn-Hilliard type equation with a
transport term. This parameter influences the (average) fluid velocity
$u$ through a capillarity force (called Korteweg force) proportional to
$\mu\nabla\varphi$, where $\mu$ is the chemical potential. Note
that this force is concentrated close to the diffuse interface.

In a simplified setting where the density $\varrho$ of the mixture is
supposed to be constant as well as the viscosity $\nu$ and the
mobility $m$, the model reduces to
\begin{eqnarray}
& & \varphi_t+u\cdot\nabla\varphi=m\Delta\mu \label{CHNS1}\\
& &\varrho u_t-\nu\Delta u+(u\cdot\nabla)u+\nabla\pi=
\kappa\mu\nabla\varphi+h \label{CHNS2}\\
& &\mbox{div}(u)=0 \label{CHNS3}
\end{eqnarray}
in $\Omega\times(0,T)$, where $\Omega$ is a domain in
$\mathbb{R}^d$, $d=2,3$, $T>0$ is a given final time, $\pi$ is the
pressure, $\kappa$ is a given positive constant and $h$ represents
volume forces applied to the binary mixture fluid. The chemical
potential $\mu$ is the first variation of the free energy functional (see
\cite{CH})
\begin{equation}
\label{energyCH}
E(\varphi) = \int_\Omega \left(\frac{\xi}{2}\vert\nabla \varphi(x)\vert^2
+ \eta F(\varphi(x))\right)dx.
\end{equation}
Here $F$ represents the (density of) potential energy. This function is usually
a double-well potential whose wells are located in the pure
phases, while $\xi$ and $\eta$ are given positive constants. The
potential can be defined either on the whole real line (smooth
potential) or on a bounded interval (singular potential). The latter case
(in a logarithmic form) is the most appropriate choice from the
modeling viewpoint (cf. \cite{CH}), while the former can be considered
as an approximation.

In the context of statistical mechanics, the square gradient term in
\eqref{energyCH} arises from attractive long-ranged interactions
between the molecules of the fluid and $\xi$ can be related to the pair
correlation function (see, e.g., \cite{AMW} and references therein).
We also recall that $\kappa$ and $\xi$ are of the same order as the
interface thickness $\varepsilon>0$, while $\eta$ is proportional to
$\varepsilon^{-1}$. On account of \eqref{energyCH},  the chemical
potential takes the following form
\begin{equation}
\mu = -\xi\Delta \varphi + \eta F^\prime(\varphi). \label{chempot}
\end{equation}
Systems like \eqref{CHNS1}-\eqref{chempot}, also known as
Cahn-Hilliard-Navier-Stokes systems, have been studied from the
mathematical viewpoint by several authors  (see, for instance,
\cite{A1,A2,A3, B, B2,B3,GG1,GG2,S, ZWH}, cf. also \cite{F,KSW,SY}
for numerical issues).

A different form of the free energy has been proposed in
\cite{GL1,GL2} and rigorously justified as a macroscopic limit of
microscopic phase segregation models with particle conserving
dynamics (see also \cite{CF}). In this case the gradient term is
replaced by a nonlocal spatial operator, namely,
\begin{equation}
\label{energyNCH}
\mathcal{E}(\varphi) = \frac{1}{4}
\int_{\Omega}\int_{\Omega}J(x-y)
(\varphi(x)-\varphi(y))^2 dxdy + \eta\int_\Omega F(\varphi(x))dx,
\end{equation}
where $J:\mathbb{R}^d \to \mathbb{R}$ is a smooth function such
that $J(x)=J(-x)$. Taking the first variation of $\mathcal{E}$ we can
define the chemical potential associated with the nonlocal model
\begin{equation}
\label{mu}
\mu=a\varphi-J\ast\varphi+\eta F^\prime(\varphi)
\end{equation}
where
\begin{equation}
(J\ast\varphi)(x):=\int_{\Omega}J(x-y)\varphi(y)dy,\qquad a(x):=
\int_{\Omega}J(x-y)dy,\qquad x\in\Omega.
\label{defp}
\end{equation}
The corresponding nonlocal Cahn-Hilliard equation $\varphi_t
=m\Delta\mu$ can be derived from idealized microscopic models
through suitable limits like the diffusion equation and the Boltzmann
equation.  Moreover, the evolution in the sharp interface limits are the
same as those derived from the classical Cahn-Hilliard equation in the
corresponding limits (see \cite{GL2}). However, from the
mathematical viewpoint, the nonlocal Cahn-Hilliard equation, due to
its integrodifferential nature, is rather difficult to handle (see, e.g.,
\cite{BH1,BH2,CKRS,G,GZ,H,LP}). Here we consider system
\eqref{CHNS1}-\eqref{CHNS3} with \eqref{mu}. More precisely,
taking for simplicity all the constants but $\nu$ equal to one, we want
to study the following initial and boundary value problem
\begin{eqnarray}
& & \varphi_t+u\cdot\nabla\varphi=\Delta\mu\label{sy1}\\
& &\mu=a\varphi-J\ast\varphi+F^\prime(\varphi)\\
& &u_t-\nu\Delta u+(u\cdot\nabla)u+\nabla\pi=\mu\nabla\varphi+h\\
& &\mbox{div}(u)=0\\
& &\frac{\partial\mu}{\partial n}=0,\quad u=0\quad\mbox{on }
\partial\Omega\times (0,T)\label{nslip}\\
& &u(0)=u_0,\quad\varphi(0)=\varphi_0\quad\mbox{in }\Omega,
\label{sy6}
\end{eqnarray}
where $\Omega\subset\mathbb{R}^d$, $d=2,3$, is a bounded
domain with sufficiently smooth boundary and unit outward normal
$n$. The no-flux boundary condition for $\mu$ is the usual one for
Cahn-Hilliard type equations (cf., e.g., \cite{BH1}) and implies the
conservation of mass (see Remark \ref{mass} below). The no-slip
boundary condition for $u$ is also standard especially when one wants
to investigate a new model involving Navier-Stokes equations
(periodic boundary conditions can also be considered).

In this contribution we prove the existence of a global weak solution
for smooth potentials $F$ of arbitrary polynomial growth. Moreover, if
$F$ satisfies a suitable coercivity condition then we can slightly improve
the smoothness properties of the solution.
In particular, we show the validity of an energy identity if $d=2$.
These results are a first step towards the mathematical analysis of problem
\eqref{sy1}-\eqref{sy6}.  However, further issues (such as, e.g., uniqueness
in two dimensions) do not seem so straightforward to prove. The main
difficulty arises from the presence of the nonlocal term which implies
that $\varphi$ is not as regular as for the standard (local)
Cahn-Hilliard-Navier-Stokes system (cf. Remark \ref{smooth} below).
For this reason, we have not been able even to establish uniqueness of
weak solutions in two dimensions.

% % % % % % % % % % % % % % % % % % % % % % % % % % % % % % % % % % % % % % % % % % % % % % % % % % % % % % % %
\section{Notation and functional setup}\setcounter{equation}{0}
Let us set $V_s:=D(B^{s/2})$ for every $s\in\mathbb{R}$, where
$B=-\Delta+I$ with homogeneous Neumann boundary conditions.
Hence we have
$$
V_2=D(B)=\left\{v\in H^2(\Omega):\;
\frac{\partial v}{\partial n} =0\mbox{
on }\partial\Omega\right\}.
$$
We also define $H:=V_0=L^2(\Omega)$ and
$V:=V_1=H^1(\Omega)$. Then we introduce the classical Hilbert
spaces for the Navier-Stokes equations (see, e.g., \cite{T})
$$G_{div}:=\overline{\{u\in C^\infty_0(\Omega)^d:\mbox{ div}(u)=0\}}^{L^2(\Omega)^d},$$
and
$$V_{div}:=\{u\in H_0^1(\Omega)^d:\mbox{ div}(u)=0\}.$$
We denote by $\|\cdot\|$ and $(\cdot,\cdot)$ the norm and the scalar product, respectively,
on both $H$ and $G_{div}$. We recall that $V_{div}$ is endowed with the scalar product
$$(u,v)_{V_{div}}=(\nabla u,\nabla v),\qquad\forall u,v\in V_{div}.$$
We also need to introduce the Stokes operator  $A:D(A)\cap
G_{div}\to G_{div}$. Recall that, in the case of no-slip boundary
condition \eqref{nslip}
$$A=-P\Delta,\qquad D(A)=H^2(\Omega)^d\cap V_{div},$$
where $P:L^2(\Omega)^d\to G_{div}$ is the Leray projector. Notice that we have
$$(Au,v)=(u,v)_{V_{div}}=(\nabla u,\nabla v),\qquad\forall u\in D(A),\quad\forall v\in V_{div}.$$
We also recall that $A^{-1}:G_{div}\to G_{div}$ is a self-adjoint compact operator in $G_{div}$
and by the classical spectral theorems there exists a sequence $\lambda_j$ with $$0<\lambda_1\leq\lambda_2\leq\cdots,\qquad\lambda_j\to\infty,$$
and a family of $w_j\in D(A)$ which is orthonormal in $G_{div}$ and such that
$$Aw_j=\lambda_jw_j.$$
Finally, for $u$, $v$, $w\in V_{div}$ we define the trilinear $V_{div}-$continuous form
$$b(u,v,w)=\int_{\Omega}(u\cdot\nabla)v\cdot w,$$
and the bilinear operator $\mathcal{B}$ from $V_{div}\times V_{div}$ into $V_{div}'$ defined by
$$\langle\mathcal{B}(u,v),w\rangle=b(u,v,w),\qquad\forall u,v,w\in V_{div}.$$
We recall that we have
\begin{equation}
b(u,w,v)=-b(u,v,w),\qquad\forall u,v,w\in V_{div},\label{ftril1}
\end{equation}
and that, for every $u$, $v$ and $w\in V_{div}$, the following
estimates hold
\begin{eqnarray}
& &|b(u,v,w)|\leq c\|u\|^{1/2}\|\nabla u\|^{1/2}\|\nabla v\|\|\nabla w\|,\quad\mbox{for }d=3,\label{ftril2}\\
& &|b(u,v,w)|\leq c\|u\|^{1/2}\|\nabla u\|^{1/2}\|\nabla v\|\|w\|^{1/2}\|\nabla w\|^{1/2},\quad\mbox{for }d=2.
\label{ftril3}
\end{eqnarray}

In this paper $c$ will stand for a nonnegative constant depending possibly only on $J$, $f$, $\Omega$, $\nu$ and $T$. The value of $c$ may vary even within the same line.
We shall denote by $N$, $M$ or $L$ generic nonnegative constants that depend on the initial
data $u_0$, $\varphi_0$ and on $h$ and whose values will be
explicitly pointed out if needed.

% % % % % % % % % % % % % % % % % % % % % % % % % % % % % % % % % % % % % % % % % % % % % % % % % % %

\section{Main result}\setcounter{equation}{0}
In this section we first define the notion of weak solution to problem
\eqref{sy1}-\eqref{sy6} which will be called Problem \textbf{P}.
Then we state the main result of this paper and a related corollary.

Our assumptions on the kernel $J$, the potential $F$ and the forcing
term $h$ are the following (cf. also \eqref{defp})
\begin{description}
\item[(H1)] $J\in W^{1,1}(\mathbb{R}^d),\quad
    J(x)=J(-x),\quad a(x) := \displaystyle
\int_{\Omega}J(x-y)dy \geq 0,\quad\mbox{a.e. } x\in\Omega$.
\item[(H2)] $F\in C^2(\mathbb{R})$ and there exists $c_0>0$
    such that
            $$F^{\prime\prime}(s)+a(x)\geq c_0,\qquad\forall s\in\mathbb{R},\quad\mbox{a.e. }x\in\Omega.$$
\item[(H3)] There exist
    $c_1>\frac{1}{2}\|J\|_{L^1(\mathbb{R}^d)}$ and
    $c_2\in\mathbb{R}$ such that
            $$F(s)\geq c_1s^2-c_2,\qquad\forall s\in\mathbb{R}.$$
\item[(H4)] There exist $c_3>0$, $c_4\geq0$ and $p\in(1,2]$
    such that
            $$|F^\prime(s)|^p\leq c_3|F(s)|+c_4,\qquad
            \forall s\in\mathbb{R}.$$
\item[(H5)] $h\in L_{loc}^2(0,T;V_{div}')$ for all $T>0$.
\end{description}

\begin{oss}
{\upshape The requirements of assumption (H1) are standard for the
nonlocal Cahn-Hilliard equation (see, e.g., \cite{BH1} for slightly
stronger hypotheses).}
\end{oss}

\begin{oss}
\label{g} {\upshape Assumption (H2) implies that the potential $F$
is a quadratic perturbation of a (strictly) convex function. Indeed, if
we set $a^{\ast}:=\|a\|_{\infty}$, then $F$ can be represented as
\begin{equation}
F(s)=G(s)-\frac{a^{\ast}}{2}s^2,\label{convex}
\end{equation}
with $G\in C^2(\mathbb{R})$ strictly convex, since $G''\geq c_0$ in
$\Omega$. }
\end{oss}

\begin{oss}
\label{dw}{\upshape Assumption (H4) is fulfilled by a
potential of arbitrary polynomial growth. In particular, (H2)-(H4) are
satisfied for the case of the physically relevant double-well potential,
i.e.
$$F(s)=(1-s^2)^2.$$
In this case we take $p=4/3$ in (H4), while assumption (H2) is
satisfied if and only if we have $p\geq c_0+m_0$, where
$m_0=-\min_{s\in\mathbb{R}}F^{\prime\prime}(s)$.}
\end{oss}

By weak solution we mean

\begin{defn}
\label{wfdfn} Let $u_0\in G_{div}$, $\varphi_0\in H$ with
$F(\varphi_0)\in L^1(\Omega)$ and $0<T<+\infty$ be given. Then
$[u,\varphi]$ is a weak solution to Problem \textbf{P} on $[0,T]$
corresponding to $u_0$ and $\varphi_0$ if
\begin{itemize}
\item $u$, $\varphi$ and $\mu$ satisfy
\begin{eqnarray}
& &u\in L^{\infty}(0,T;G_{div})\cap L^2(0,T;V_{div}),\label{df1}\\
& &u_t\in L^{4/3}(0,T;V_{div}'),\qquad\mbox{if}\quad d=3,\\
& &u_t\in L^{2-\gamma}(0,T;V_{div}'),\qquad\forall\gamma\in(0,1),\quad\mbox{if}\quad d=2,\\
& &\varphi\in L^{\infty}(0,T;H)\cap L^2(0,T;V),\\
& &\varphi_t\in L^{4/3}(0,T;V'),\quad\mbox{if}\quad d=3,\label{df5}\\
& &\varphi_t\in L^{2-\delta}(0,T;V'),
\quad\forall\delta\in(0,1),\quad\mbox{if}\quad d=2,\label{s}\label{df6}\\
& & \mu\in L^2(0,T;V).
\end{eqnarray}
\item setting
\begin{equation}
\rho(x,\varphi):=a(x)\varphi+F^\prime(\varphi),\label{h}
\end{equation}
then, for every $\psi\in V$, every $v\in V_{div}$ and for almost
any $t\in(0,T)$ we have
\begin{eqnarray}
& &\langle\varphi_t,\psi\rangle+(\nabla\rho,\nabla\psi)=\int_{\Omega}(u\cdot\nabla\psi)\varphi
+\int_{\Omega}(\nabla J\ast\varphi)\cdot\nabla\psi,\label{wf1}\\
& &\langle u_t,v\rangle+\nu(\nabla u,\nabla v)+b(u,u,v)=-\int_{\Omega}(v\cdot\nabla\mu)\varphi
+\langle h,v\rangle.\label{wf2}
\end{eqnarray}
\item the following initial conditions hold
\begin{equation}
u(0)=u_0,\qquad\varphi(0)=\varphi_0.\label{ic}
\end{equation}
\end{itemize}
\end{defn}

\begin{oss}
{\upshape
Since $\rho=\mu+J\ast\varphi$, from Definition \ref{wfdfn} we have that $\rho\in L^2(0,T;V)$.}
\end{oss}

\begin{oss}
\label{mass} {\upshape It is immediate to see that the total mass is
conserved. Indeed, choosing $\psi=1$ in \eqref{wf1}, we have
$\langle\varphi_t,1\rangle=0$ whence $(\varphi(t),1)=
(\varphi_0,1)$ for all $t\geq 0$.}
\end{oss}

\begin{oss}
{\upshape
The initial conditions \eqref{ic} are meant in the weak sense, i.e.,
for every $v\in V_{div}$ we have $(u(t),v)\to(u_0,v)$ as $t\to 0$,
and for every $\chi\in V$ we have $(\varphi(t),\chi)\to(\varphi_0,\chi)$ as $t\to 0$.
It can be proved that $u\in C_w([0,T];G_{div})$ and $\varphi\in C_w([0,T];H)$.
}
\end{oss}

\begin{thm}
\label{thm} Let $u_0\in G_{div}$, $\varphi_0\in H$ such that
$F(\varphi_0)\in L^1(\Omega)$ and suppose that (H1)-(H5) are
satisfied. Then, for every $T>0$ there exists a weak solution
$[u,\varphi]$ to Problem \textbf{P} on $[0,T]$ corresponding to
$u_0$, $\varphi_0$ with $\varphi_t$ satisfying
\begin{eqnarray}
& &\varphi_t\in L^{\infty}(0,T;V_s'),
\quad\mbox{if}\quad 1<p<\frac{d}{d-1},\:
s=\frac{(4-d)p+2d}{2p},\nonumber\\
%\label{p2}
& &\varphi_t\in L^{\infty}(0,T;V_s')\cap L^r(0,T;V_{\frac{d+2}{2}}'),\quad\mbox{if}\quad p=\frac{d}{d-1},\:\:
s>\frac{d+2}{2},\:\: r\geq 2,\nonumber\\
& &\varphi_t\in L^{2p/(2p-3)}(0,T;V_s'),\quad\mbox{if}\quad d=3,\:\: 3/2<p\leq 2,\:\: s=\frac{p+6}{2p}.
\nonumber
\end{eqnarray}
Furthermore, setting
$$\mathcal{E}(u(t),\varphi(t))=\frac{1}{2}\|u(t)\|^2+\frac{1}{4}
\int_{\Omega}\int_{\Omega}J(x-y)(\varphi(x,t)-\varphi(y,t))^2 dxdy+\int_{\Omega}F(\varphi(t))$$
the following energy
inequality holds for almost any $t>0$
\begin{equation}
\mathcal{E}(u(t),\varphi(t)) +\int_0^t(\nu\|\nabla u(\tau)\|^2+\|\nabla\mu(\tau)\|^2)d\tau
\leq\mathcal{E}(u_0,\varphi_0)  +\int_0^t\langle h(\tau),u(\tau)\rangle d\tau.\label{ei}
\end{equation}
\end{thm}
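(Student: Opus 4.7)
The plan is a Faedo--Galerkin scheme followed by a compactness argument. For $u$ I use the Stokes eigenbasis $\{w_j\}$ and for $\varphi$ the eigenbasis $\{\zeta_j\}$ of $B$, seeking $u_n,\varphi_n$ in the corresponding $n$-dimensional subspaces with initial data given by spectral projection. The discrete system is locally Lipschitz and admits local solutions; global existence will follow from the a~priori estimates. Testing the discrete phase equation with the chemical potential $\mu_n:=a\varphi_n-J\ast\varphi_n+F'(\varphi_n)$ and the discrete Navier--Stokes equation with $u_n$, the Korteweg term cancels the convective transport term (both identities use $\mathrm{div}\,u_n=0$), giving the energy identity
$$\frac{d}{dt}\mathcal{E}(u_n,\varphi_n)+\nu\|\nabla u_n\|^2+\|\nabla\mu_n\|^2=\langle h,u_n\rangle.$$
By (H3) the left-hand side controls $\|u_n\|^2+\|\varphi_n\|^2+\|F(\varphi_n)\|_{L^1}$, so Gronwall yields uniform bounds on $u_n$ in $L^\infty(G_{div})\cap L^2(V_{div})$, on $\varphi_n$ in $L^\infty(0,T;H)$, on $F(\varphi_n)$ in $L^\infty(0,T;L^1)$, and on $\nabla\mu_n$ in $L^2(0,T;L^2)$. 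Next, the identity $\nabla\mu_n=(a+F''(\varphi_n))\nabla\varphi_n+\varphi_n\nabla a-\nabla J\ast\varphi_n$ together with the coercivity $a+F''\geq c_0$ from (H2) gives $\varphi_n\in L^2(0,T;V)$, while (H4) and the $L^1$-bound on $F(\varphi_n)$ yield $F'(\varphi_n)\in L^\infty(0,T;L^p)$; combined with mass conservation and Poincar\'e--Wirtinger this upgrades the chemical potential to $\mu_n\in L^2(0,T;V)$.

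The bounds on $\varphi_{n,t}$ follow by testing \eqref{wf1} with $\psi\in V_s$ and estimating the right-hand side via H\"older, using the Sobolev embedding $V_s\hookrightarrow L^{p'}(\Omega)$ (with $p'$ the conjugate exponent of $p$) to handle the $F'(\varphi_n)$ contribution inside $\nabla\rho_n$, and $L^4$-interpolation for the convective term $\int(u_n\cdot\nabla\psi)\varphi_n$. The three regimes stated for $\varphi_t$ correspond precisely to whether $V_s\hookrightarrow L^\infty$ (the subcritical case $p<d/(d-1)$), is borderline ($p=d/(d-1)$), or supercritical ($d=3$, $p>3/2$). The bound on $u_{n,t}$ comes from \eqref{wf2} in the standard way: the Korteweg term is recast as $-\int\varphi_n\,v\cdot\nabla\mu_n$ and controlled via the $L^4$-bounds on $u_n$ and $\varphi_n$ available in dimension $d$. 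With uniform bounds in hand I extract a weakly convergent subsequence; the Aubin--Lions--Simon lemma yields strong convergence of $\varphi_n$ in $L^2(0,T;H)$ and pointwise a.e., and of $u_n$ in $L^2(0,T;G_{div})$, which suffices to pass to the limit in all nonlinear terms. In particular, $F'(\varphi_n)\to F'(\varphi)$ in $L^q(Q_T)$ for some $q>1$ by Vitali's theorem, equi-integrability being supplied by the $L^\infty(L^p)$-bound. The energy inequality \eqref{ei} follows by weak lower semicontinuity of $\mathcal{E}$ (using the convex decomposition \eqref{convex} of Remark~\ref{g}) and Fatou's lemma for the dissipation.

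The main obstacle I foresee lies in the passage to the limit in $F'(\varphi_n)$ and in the Korteweg coupling $\mu_n\nabla\varphi_n$: the nonlocal structure does not produce an $H^2$-bound on $\varphi$ as in the local case (as highlighted in the introduction), so only $\varphi\in L^\infty(H)\cap L^2(V)$ is available, and with $p\in(1,2]$ possibly close to $1$ the estimate on $F'(\varphi_n)$ cannot be improved beyond $L^\infty(0,T;L^p)$. This forces the test space in the weak equation for $\varphi_t$ to embed into $L^{p'}$, producing the three exponent combinations stated, and makes balancing the convective against the chemical-potential contributions in the time-derivative estimate the technical heart of the argument.
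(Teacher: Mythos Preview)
Your overall strategy matches the paper's, but there is a genuine gap at the initial data. You take $\varphi_{0n}=P_n\varphi_0$ by spectral projection, and the Gronwall step then needs a uniform-in-$n$ bound on $\int_\Omega F(\varphi_{0n})$. Since $F$ may have arbitrary polynomial growth while $\varphi_0$ is only in $H$ with $F(\varphi_0)\in L^1(\Omega)$, the projections $P_n\varphi_0$ converge to $\varphi_0$ in $H$ but not, in general, in any $L^q$ with $q>2$; hence $\int_\Omega F(P_n\varphi_0)$ is not a~priori bounded and your energy argument does not start. The paper handles this in two stages: it first proves existence under the extra hypothesis $\varphi_0\in D(B)$, so that $P_n\varphi_0\to\varphi_0$ in $H^2\hookrightarrow L^\infty$ and the bound on $\int_\Omega F(\varphi_{0n})$ is immediate; it then removes this hypothesis via the resolvent approximation $\varphi_{0k}=(I+k^{-1}B)^{-1}\varphi_0\in D(B)$. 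The crucial uniform estimate $\int_\Omega F(\varphi_{0k})\leq\int_\Omega G(\varphi_0)$ uses the convex decomposition $F(s)=G(s)-\tfrac{a^*}{2}s^2$ of Remark~\ref{g}: since $g=G'$ is monotone with $g(0)=0$, testing $\varphi_{0k}-\varphi_0=-k^{-1}B\varphi_{0k}$ against $g(\varphi_{0k})$ gives $\int_\Omega G(\varphi_{0k})\leq\int_\Omega G(\varphi_0)$. You invoke the convex decomposition only for lower semicontinuity of $\mathcal{E}$ in the limit, not for this density step, so your argument does not close as written.

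A secondary technical point: your unprojected $\mu_n=a\varphi_n-J\ast\varphi_n+F'(\varphi_n)$ does not lie in $\Psi_n$ and so is not an admissible test function in the discrete phase equation; likewise, the exact cancellation between the Korteweg and transport terms at the Galerkin level requires the \emph{same} (projected) $\mu_n$ to appear in both equations. The paper defines $\mu_n=P_n(a\varphi_n+F'(\varphi_n)-J\ast\varphi_n)\in\Psi_n$ and exploits the identity $(\nabla\rho(\cdot,\varphi_n),\nabla\mu_n)=(\nabla P_n\rho(\cdot,\varphi_n),\nabla\mu_n)$ (valid because $-\Delta\mu_n\in\Psi_n$) to recover $\|\nabla\mu_n\|^2$ plus a controllable cross term $(\nabla P_n(J\ast\varphi_n),\nabla\mu_n)$; this is the mechanism that produces the clean energy identity at the approximate level.
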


%%%%%%%%%%%%%%%%%%%%%%%%%%%%%%%
On account of the typical examples of double-well smooth potentials (cf. Remark \ref{dw}), the following additional assumption sounds reasonable (see, e.g., \cite[(A2)]{BH1})

\medskip
\begin{description}
\item[(H6)] $F\in C^2(\mathbb{R})$ and there exist $c_5>0$, $c_6>0$ and $q>0$
    such that
            $$F^{\prime\prime}(s)+a(x)\geq c_5\vert s\vert^{2q} - c_6,
            \qquad\forall s\in\mathbb{R},\quad\mbox{a.e. }x\in\Omega.$$
\end{description}
This requirement can replace (H3) in the proof of Theorem \ref{thm} (see
\eqref{coerc} below). Indeed, (H6) implies the existence of $c_7>0$
and $c_8>0$ such that
\begin{equation}
\label{coerc}
F(s)\geq c_7|s|^{2+2q}-c_8,\quad\forall s\in\mathbb{R}.
\end{equation}
Moreover, (H6) leads to establish further regularity properties for $\varphi$, $\varphi_t$, $u_t$.
This is stated in the following

\begin{cor}
\label{cor}
Suppose that the assumptions of Theorem \ref{thm} with (H3) replaced by (H6).
Then, for every $T>0$ there exists a weak solution
$[u,\varphi]$ to Problem \textbf{P} on $[0,T]$ corresponding to
$[u_0,\varphi_0]$ such that
\begin{eqnarray}
& &\varphi \in L^\infty(0,T;L^{2+2q}(\Omega)), \label{impr0}\\
& &\varphi_t\in L^2(0,T;V'),\quad\mbox{if}\quad d=2\quad\mbox{ or }\quad d=3 \mbox{ and } q\geq 1/2,\label{impr2}\\
& & u_t\in L^2(0,T;V_{div}'),\quad\mbox{if}\quad d=2,\label{u_tnew}
\end{eqnarray}
and
\begin{eqnarray}
& &\varphi_t\in L^{\infty}(0,T;V_s'),\quad\mbox{if }
\left\{\begin{array}{ll}
d=2,3,\quad 1<p\leq \frac{d}{d-1},\\
d=3,\quad 3/2<p\leq 2,\quad q\geq\frac{2(2p-3)}{6-p},
\end{array} \right.\label{impr1}\\
& &\varphi_t\in L^{\sigma}(0,T;V_s'),\quad\mbox{if}\quad d=3,\quad 3/2<p\leq 2,\quad 0<q<\frac{2(2p-3)}{6-p},
\label{beta}
\end{eqnarray}
where $s=((4-d)p+2d)/2p$ and in \eqref{beta} the exponent $\sigma$
is given by
$$\sigma=\frac{2p(1-\frac{q}{2})}{(2p-3)-q(3-\frac{p}{2})}.$$
\end{cor}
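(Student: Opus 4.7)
The strategy is to extract further a priori estimates from the sharper coercivity provided by (H6), reading them directly off the weak formulation produced by Theorem~\ref{thm}. The Galerkin construction that yields the weak solution is unaffected by replacing (H3) with (H6), since (H6) implies (H3) through the coercivity estimate \eqref{coerc}.

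To begin with, I would establish \eqref{coerc} by integrating twice the pointwise lower bound on $F''+a$ given by (H6); this is elementary. Then \eqref{impr0} follows from the energy inequality \eqref{ei} of Theorem~\ref{thm}: the bound on $\mathcal{E}(u(t),\varphi(t))$ (after absorption of the forcing term $\int_0^t\langle h,u\rangle d\tau$ via $\nu\|\nabla u\|^2$) controls the $L^1$-norm of $F(\varphi(t))$ uniformly in $t$, and \eqref{coerc} converts this into the claimed uniform bound on $\|\varphi(t)\|_{L^{2+2q}(\Omega)}$.

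For the derivative estimates, I would first rewrite \eqref{wf1} in a simpler form. Using $\rho=\mu+J\ast\varphi$, one has $\nabla\rho=\nabla\mu+(\nabla J)\ast\varphi$, so the two convolution pieces in \eqref{wf1} cancel and the weak equation reduces to
$$\langle\varphi_t,\psi\rangle=-(\nabla\mu,\nabla\psi)+(u\varphi,\nabla\psi),\qquad\forall\psi\in V.$$
For \eqref{impr2} I would bound $\|u\varphi\|_{L^2}$ by H\"older, pairing $\varphi\in L^\infty(0,T;L^{2+2q})$ with $u\in L^2(0,T;L^{(2+2q)/q})$ in $d=2$ (through the unrestricted embedding $V_{div}\hookrightarrow L^r$ valid for every finite $r$), or with $u\in L^2(0,T;L^6)$ and $\varphi\in L^\infty(0,T;L^3)$ in $d=3$ when $2+2q\geq 3$; combined with $\nabla\mu\in L^2(0,T;H)$ this produces $\varphi_t\in L^2(0,T;V')$. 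For \eqref{u_tnew} I would estimate each of the four terms in \eqref{wf2} tested against $v\in V_{div}$: the viscous and forcing contributions are immediate from \eqref{df1} and (H5); the trilinear term obeys $|b(u,u,v)|\leq c\|u\|\|\nabla u\|\|\nabla v\|$ in $d=2$, which is integrable since $\|u\|\in L^\infty(0,T)$ and $\|\nabla u\|\in L^2(0,T)$; the Korteweg term $\int v\cdot\nabla\mu\,\varphi$ is handled by the same H\"older triple as for \eqref{impr2}, with $v\in L^{(2+2q)/q}$.

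For the refined bounds \eqref{impr1} and \eqref{beta} I would revisit the identity for $\varphi_t$ with test function $\psi\in V_s$, invoking the Sobolev embedding $V_s\hookrightarrow W^{1,\ell}$ to match the H\"older triple $1/r+1/(2+2q)+1/\ell=1$. For $1<p\leq d/(d-1)$ the exponent $s$ already available from Theorem~\ref{thm} works, and one just sharpens the borderline strict inequality on $s$ at $p=d/(d-1)$ to an equality via the $L^{2+2q}$ control on $\varphi$. In the 3D subcritical regime $3/2<p\leq 2$ the decisive question is whether the coercivity exponent $q$ is large enough to allow replacing time-integrated norms on $u$ by the $L^\infty_t$ bound $\|u\|\in L^\infty(0,T)$; when $q\geq 2(2p-3)/(6-p)$ this is the case, giving \eqref{impr1}. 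For smaller $q$, I would interpolate $u$ between $L^\infty(0,T;G_{div})$ and $L^2(0,T;L^6)$, and the resulting Lebesgue exponent in time is exactly the $\sigma$ appearing in \eqref{beta}. The main obstacle will be the tight bookkeeping among the three parameters $p,q,s$ and the verification that $q=2(2p-3)/(6-p)$ is indeed the precise threshold at which $\sigma\to+\infty$, so that the two cases match continuously.
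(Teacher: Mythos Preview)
Your strategy is sound and parallels the paper closely: both arguments rely on the extra $L^\infty(0,T;L^{2+2q})$ control on $\varphi$ coming from \eqref{coerc}, then re-estimate the transport term in the equations for $\varphi_t$ and $u_t$. The paper carries this out at the Galerkin level (re-doing the bounds on $u_n'$, $\varphi_n'$ with the sharper $\varphi_n$-estimate and passing to the limit), while you work directly with the weak solution of Theorem~\ref{thm}; both routes are legitimate. One small omission in your write-up: for the $L^\infty(0,T;V_s')$ bounds you must not keep the term $-(\nabla\mu,\nabla\psi)$ in that form (it is only $L^2$ in time), but integrate by parts to $(\mu,\Delta\psi)$ and use $\mu=\rho-J\ast\varphi\in L^\infty(0,T;L^p)$ together with $V_s\hookrightarrow W^{2,p'}$, exactly as in the paper's estimate \eqref{derphi1}.

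The one substantive discrepancy concerns \eqref{beta}. The paper does \emph{not} interpolate $u$; it keeps $u\in L^\infty(0,T;L^2)$, pairs $\nabla\psi\in L^{3p/(2p-3)}$, and interpolates $\varphi$ between $L^\infty(0,T;L^{2+2q})$ and $L^2(0,T;L^6)$ (via $L^2(0,T;V)$), obtaining the exponent $\alpha=(4p+pq-6-6q)/[p(2-q)]$ on the $L^6$-factor and hence $\sigma=2/\alpha=2p(2-q)/(4p+pq-6-6q)$. Your proposal to interpolate $u$ between $L^\infty(0,T;L^2)$ and $L^2(0,T;L^6)$, keeping $\varphi\in L^\infty(0,T;L^{2+2q})$, is perfectly valid but does \emph{not} yield ``exactly the $\sigma$'': a direct computation of your H\"older triple $1/r+1/(2+2q)+(2p-3)/(3p)=1$ and of $\theta$ in $1/r=1/2-\theta/3$ gives
\[
\sigma_{\text{yours}}=\frac{2}{\theta}=\frac{4p(1+q)}{4p+pq-6-6q},
\]
which is \emph{strictly larger} than the stated $\sigma$ for every $q>0$ (the two agree only at $q=0$, and both blow up at the threshold $q=2(2p-3)/(6-p)$). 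So your claim of matching is incorrect, but the news is good: your interpolation actually proves a sharper time-integrability, and in particular it implies \eqref{beta} as stated.
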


In two dimensions, as further consequences of (H6), we can prove
the energy identity and a dissipative estimate,
provided that $h\in L^2(0,\infty; V_{div}')$. Indeed, we have
\begin{cor}
\label{cor2}
Let $d=2$ and suppose that the assumptions of Theorem \ref{thm} with (H3) replaced by (H6) hold.
Then the weak solution $[u,\varphi]$ to Problem \textbf{P} corresponding to
$[u_0,\varphi_0]$ satisfies
\begin{equation}
\frac{d}{dt}\mathcal{E}(u,\varphi)+\nu\|\nabla u\|^2+\|\nabla\mu\|^2=\langle h,u\rangle.
\label{idendiffcor}
\end{equation}
Therefore, \eqref{ei} with the equal sign
holds for every $t\geq 0$.
Furthermore, if in addition $h\in L^2(0,\infty; V_{div}')$,
then the following dissipative estimate is satisfied
\begin{equation}
\mathcal{E}(u(t),\varphi(t))\leq \mathcal{E}(u_0,\varphi_0)e^{-kt}+ F(m)|\Omega| + K,
\qquad\forall t\geq 0,\label{dissest}
\end{equation}
where $m=(\varphi_0,1)$ and $k$, $K$ are two positive constants
which are independent of the initial data, with $K$ depending on
$\Omega$, $\nu$, $J$, $F$, $\|h\|_{L^2(0,\infty;V_{div}')}$.
\end{cor}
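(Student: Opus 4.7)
My plan is to establish \eqref{idendiffcor} by testing the weak formulation with $\mu$ and $u$, and then derive the dissipative estimate \eqref{dissest} from it via a Gr\"onwall argument after proving a coercivity inequality. For the first step, in the two-dimensional setting Corollary \ref{cor} grants $\varphi_t\in L^2(0,T;V')$ and $u_t\in L^2(0,T;V_{div}')$; combined with $u\in L^2(0,T;V_{div})$ and $\mu\in L^2(0,T;V)$, this makes $\psi=\mu$ in \eqref{wf1} and $v=u$ in \eqref{wf2} admissible test functions for a.e.\ $t\in(0,T)$. Summing the two resulting identities, the convective couplings $\pm\int_\Omega(u\cdot\nabla\mu)\varphi$ cancel, $b(u,u,u)=0$ by \eqref{ftril1}, and the splitting $\nabla\rho=\nabla\mu+\nabla(J\ast\varphi)$ turns $(\nabla\rho,\nabla\mu)$ combined with the nonlocal term $(\nabla J\ast\varphi,\nabla\mu)$ into exactly $\|\nabla\mu\|^2$. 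What remains is
\begin{equation*}
\langle\varphi_t,\mu\rangle+\langle u_t,u\rangle+\nu\|\nabla u\|^2+\|\nabla\mu\|^2=\langle h,u\rangle.
\end{equation*}

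To recognise $\langle\varphi_t,\mu\rangle+\langle u_t,u\rangle$ as $\tfrac{d}{dt}\mathcal{E}(u,\varphi)$, I use $\langle u_t,u\rangle=\tfrac12\tfrac{d}{dt}\|u\|^2$ (standard) and, splitting $\mu=(a\varphi-J\ast\varphi)+F'(\varphi)$ and exploiting $J(x)=J(-x)$,
\begin{equation*}
\langle\varphi_t,a\varphi-J\ast\varphi\rangle=\frac{d}{dt}\Bigl(\frac{1}{4}\int_\Omega\int_\Omega J(x-y)(\varphi(x)-\varphi(y))^2\,dxdy\Bigr).
\end{equation*}
The genuinely delicate step is $\langle\varphi_t,F'(\varphi)\rangle=\tfrac{d}{dt}\int_\Omega F(\varphi)$, since $F'(\varphi)$ need not belong to $V$ under the available regularities. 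I would justify it by performing the computation at the Galerkin level used to construct the weak solution --- where it is trivial --- and then passing to the limit, using \eqref{impr0} and the growth condition (H4) to secure equi-integrability of $F(\varphi_n)$, the convexity of $G$ from Remark \ref{g} for the upper bound, and Fatou's lemma for the lower bound. Once this is in place, \eqref{idendiffcor} follows and, integrating, \eqref{ei} becomes an equality for every $t\geq 0$; continuity at $t=0$ is a byproduct of the energy balance together with the weak continuities $u\in C_w([0,T];G_{div})$ and $\varphi\in C_w([0,T];H)$.

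For \eqref{dissest}, assuming $h\in L^2(0,\infty;V_{div}')$, I first estimate $\langle h,u\rangle\leq\tfrac{\nu}{2}\|\nabla u\|^2+\tfrac{1}{2\nu}\|h\|_{V_{div}'}^2$ in \eqref{idendiffcor}. The central analytical task is the coercivity bound
\begin{equation*}
\mathcal{E}(u,\varphi)-F(m)|\Omega|\leq C\bigl(\nu\|\nabla u\|^2+\|\nabla\mu\|^2\bigr)+C_1,
\end{equation*}
with constants depending only on $\Omega$, $J$, $F$, $\nu$. The kinetic contribution is immediate from Poincar\'e's inequality on $V_{div}$. For the $\varphi$ part, mass conservation (Remark \ref{mass}) gives $\overline{\varphi}(t)\equiv m$, and since $\int_\Omega (a\varphi-J\ast\varphi)=0$, testing the identity $\mu=a\varphi-J\ast\varphi+F'(\varphi)$ against $\varphi-m$ yields
\begin{equation*}
(\mu,\varphi-m)=\tfrac{1}{2}\int_\Omega\int_\Omega J(x-y)(\varphi(x)-\varphi(y))^2\,dxdy+(F'(\varphi),\varphi-m).
\end{equation*}
The convexity of $G$ (Remark \ref{g}) produces $F'(\varphi)(\varphi-m)\geq F(\varphi)-F(m)-\tfrac{a^\ast}{2}(\varphi-m)^2$, while Cauchy--Schwarz and Poincar\'e--Wirtinger give $(\mu,\varphi-m)\leq c\|\nabla\mu\|^2+\tfrac{1}{2}\|\varphi-m\|^2$; the remaining $\|\varphi-m\|^2$ is then absorbed into $\int_\Omega F(\varphi)$ via \eqref{coerc} and the Young-type bound $|s|^2\leq\varepsilon|s|^{2+2q}+c_\varepsilon$. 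This absorption --- where (H6) is used in place of (H3) --- is the main technical obstacle of the proof. Combining the coercivity bound with the estimated \eqref{idendiffcor} yields
\begin{equation*}
\tfrac{d}{dt}\mathcal{E}(u,\varphi)+k\bigl(\mathcal{E}(u,\varphi)-F(m)|\Omega|\bigr)\leq kK_0+\tfrac{1}{2\nu}\|h\|_{V_{div}'}^2
\end{equation*}
for some $k>0$, and Gr\"onwall's lemma together with $h\in L^2(0,\infty;V_{div}')$ delivers \eqref{dissest} with $K$ depending only on the data listed in the statement.
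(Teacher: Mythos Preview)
Your overall architecture matches the paper's proof, and for the dissipative estimate your argument is essentially identical to the paper's (the paper first treats $m=0$ by testing $\mu$ against $\varphi$ and then reduces the general case via the shift $\widetilde\varphi=\varphi-m$, $\widetilde F(s)=F(s+m)-F(m)$, whereas you test directly against $\varphi-m$; both routes are fine).

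The one substantive divergence is in how you justify the chain rule $\langle\varphi_t,F'(\varphi)\rangle=\tfrac{d}{dt}\int_\Omega F(\varphi)$. You claim that ``$F'(\varphi)$ need not belong to $V$'' and therefore resort to a Galerkin limit. In fact $F'(\varphi)\in L^2(0,T;V)$: since $\rho=\mu+J\ast\varphi\in L^2(0,T;V)$ and $a\in W^{1,\infty}(\Omega)$ (because $\nabla a=\nabla J\ast 1_\Omega$), one has $F'(\varphi)=\rho-a\varphi\in L^2(0,T;V)$. The paper exploits exactly this: writing $F'(\varphi)=g(\varphi)-a^\ast\varphi$ with $g=G'$ monotone, it observes that $g(\varphi)\in L^2(0,T;V)$, that $\mathcal G(\varphi)=\int_\Omega G(\varphi)$ is convex and l.s.c.\ on $H$ with $g(\varphi)\in\partial\mathcal G(\varphi)$, and then invokes a subdifferential chain rule (\cite[Proposition~4.2]{CKRS}) to conclude $\langle\varphi_t,g(\varphi)\rangle=\tfrac{d}{dt}\mathcal G(\varphi)$ directly for the weak solution. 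This is clean and rigorous.

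Your proposed Galerkin route, by contrast, has a gap as stated. Even granting equi-integrability of $F(\varphi_n)$ so that $\mathcal E(u_n(t),\varphi_n(t))\to\mathcal E(u(t),\varphi(t))$ for a.e.\ $t$, passing to the limit in the integrated Galerkin identity only yields the energy \emph{inequality}: weak lower semicontinuity gives $\int_0^t(\nu\|\nabla u\|^2+\|\nabla\mu\|^2)\leq\liminf\int_0^t(\nu\|\nabla u_n\|^2+\|\nabla\mu_n\|^2)$, not equality, so the reverse inequality in \eqref{ei} is not recovered. (There are also the Galerkin correction terms $(\nabla P_n(J\ast\varphi_n),\nabla\mu_n)$ versus $(\nabla J\ast\varphi_n,\nabla\mu_n)$ in \eqref{est1}, though these do vanish in the limit.) To upgrade to an identity via approximation you would need strong convergence of $\nabla u_n$ and $\nabla\mu_n$ in $L^2(Q_T)$, which you have not established. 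The paper's direct chain-rule argument sidesteps this entirely.
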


\begin{oss}
{\upshape
It follows from Corollary \ref{cor2} that, in two dimensions, $u\in C([0,T];G_{div})$ and $\varphi\in C([0,T];H)$.
This fact along with the validity of an energy identity suggests that the generalized
semiflow approach devised in \cite{Ba} (see also \cite{MRW}) might be applied to our system. If so, one should be able to establish the existence of a global attractor. This is one of the issues which will be investigated in a forthcoming paper.
}
\end{oss}

%%%%%%%%%%%%%%%%%%%%%%%%%%%%%%% % % % % % % % % % % % % % % % % % % % % % %
\section{Proof of Theorem \ref{thm}}\setcounter{equation}{0}
The proof will be carried out by means of a Faedo-Galerkin
approximation scheme. We will assume first that $\varphi_0\in D(B)$.
The existence under the stated assumption on $\varphi_0$ will be
recovered by a density argument by exploiting the form of the
potential $F$ as a quadratic perturbation of a convex function (see
Remark \ref{g}).

We introduce the family $\{w_j\}_{j\geq 1}$ of the eigenfunctions of the Stokes operator $A$ as a Galerkin
base in $V_{div}$ and the family $\{\psi_j\}_{j\geq 1}$ of the eigenfunctions of the Neumann operator
$$B=-\Delta+I$$
as a Galerkin base in $V$. We define the $n-$dimensional subspaces
$\mathcal{W}_n:=\langle w_1,\cdots,w_n\rangle$ and
$\Psi_n:=\langle\psi_1,\cdots,\psi_n\rangle$ and consider the
orthogonal projectors on these subspaces in $G_{div}$ and $H$,
respectively, i.e., $\widetilde{P}_n:=P_{\mathcal{W}_n}$ and
$P_n:=P_{\Psi_n}$. We then look for three functions of the form
$$u_n(t)=\sum_{k=1}^n a^{(n)}_k(t)w_k,\quad \varphi_n(t)=\sum_{k=1}^n b^{(n)}_k(t)\psi_k,
\quad \mu_n(t)=\sum_{k=1}^n c^{(n)}_k(t)\psi_k$$ which solve
to the following approximating problem

\begin{eqnarray}
& &(\varphi_n',\psi)+(\nabla \rho(\cdot,\varphi_n),\nabla\psi)
=\int_{\Omega}(u_n\cdot\nabla\psi)\varphi_n+\int_{\Omega}(\nabla J\ast\varphi_n)\cdot\nabla\psi\label{ap1}\\
& &(u_n',w)+\nu(\nabla u_n,\nabla w)+b(u_n,u_n,w)=-\int_{\Omega}(w\cdot\nabla\mu_n)\varphi_n
+(h_n,w)\label{ap2}\\
& &\rho(\cdot,\varphi_n):=a(\cdot)\varphi_n+F^\prime(\varphi_n)\label{ap3}\\
& &\mu_n=P_n(p\varphi_n+f(\varphi_n)-J\ast\varphi_n)\label{ap4}\\
& &\varphi_n(0)=\varphi_{0n},\quad u_n(0)=u_{0n},\label{ap5}
\end{eqnarray}
for every $\psi\in\Psi_n$ and every $w\in\mathcal{W}_n$, where
$\varphi_{0n}=P_n\varphi_0$ and $u_{0n}=\widetilde{P}_nu_0$
(primes denote derivatives with respect to time). In \eqref{ap2}
$h_n\in C^0([0,T];G_{div})$ and, on account of (H5), we choose the
sequence of $h_n$ in such a way that $h_n\to h$ in
$L^2(0,T;V_{div}')$. It is easy to see that this approximating problem
is equivalent to solving a Cauchy problem for a system of ordinary
differential equations in the $2n$ unknowns $a^{(n)}_i$,
$b^{(n)}_i$. Since $F^\prime\in C^1(\mathbb{R})$, the
Cauchy-Lipschitz theorem ensures that there exists
$T^{\ast}_n\in(0,+\infty]$ such that this system has a unique
maximal solution
$\textbf{a}^{(n)}:=(a^{(n)}_1,\cdots,a^{(n)}_n)$,
$\textbf{b}^{(n)}:=(b^{(n)}_1,\cdots,b^{(n)}_n)$ on
$[0,T^{\ast}_n)$ and $\textbf{a}^{(n)}$, $\textbf{b}^{(n)}\in
C^1([0,T^{\ast}_n);\mathbb{R}^n)$.

% % % % % % % % % % % % % % % % % % % % % % % % % % % % % % % % % % % % % % % % % % % % % % % % % % % % % % %

%\begin{itemize}
%\item \textbf{Estimates for the sequences of $\varphi_n$, $u_n$,
%$\mu_n$ and $\rho(\cdot,\varphi_n)$} $\\$ $\\$
We now
    derive some a priori estimates in order to show that
    $T^{\ast}_n=+\infty$ for every $n\geq 1$ and that the
    sequences of $\varphi_n$, $u_n$ and $\mu_n$ are bounded in
    suitable functional spaces. By using $\mu_n$ as a test function in
    \eqref{ap1}, $u_n$ as  a test function in \eqref{ap2} and
    recalling that $b(u_n,u_n,u_n)=0$ (see \eqref{ftril1}), we obtain
\begin{eqnarray}
& &(\varphi_n',\mu_n)+(\nabla \rho(\cdot,\varphi_n),\nabla\mu_n)=\int_{\Omega}(u_n\cdot\nabla\mu_n)\varphi_n+\int_{\Omega}(\nabla J\ast\varphi_n)\cdot\nabla\mu_n\nonumber\\
& &\frac{1}{2}\frac{d}{dt}\|u_n\|^2+\nu\|\nabla u_n\|^2=-\int_{\Omega}(u_n\cdot\nabla\mu_n)\varphi_n
+(h_n,u_n).\nonumber
\end{eqnarray}
We now have
\begin{eqnarray}
& &(\varphi_n',\mu_n)=(\varphi_n',a\varphi_n+F^\prime(\varphi_n)-J\ast\varphi_n)\nonumber\\
& &=\frac{d}{dt}\Big(\frac{1}{2}\|\sqrt{a}\varphi_n\|^2+\int_{\Omega}F(\varphi_n)-\frac{1}{2}(\varphi_n,J\ast\varphi_n)\Big)\nonumber\\
& &=\frac{d}{dt}\Big(\frac{1}{4}\int_{\Omega}\int_{\Omega}J(x-y)(\varphi_n(x)-\varphi_n(y))^2 dxdy
+\int_{\Omega}F(\varphi_n)\Big).
\end{eqnarray}
Furthermore, observe that
\begin{eqnarray}
& &(\nabla \rho(\cdot,\varphi_n),\nabla\mu_n)=(-\rho(\cdot,\varphi_n),\Delta\mu_n)=(-\rho_n,\Delta\mu_n)
=(\nabla \rho_n,\nabla\mu_n),\nonumber
\end{eqnarray}
where $\rho_n:=P_n
\rho(\cdot,\varphi_n)=\mu_n+P_n(J\ast\varphi_n)$. Summing the
first two identities and taking the previous relations into account we
get
\begin{eqnarray}
& &\frac{1}{2}\frac{d}{dt}\Big(\|u_n\|^2+\frac{1}{2}\int_{\Omega}\int_{\Omega}J(x-y)(\varphi_n(x)-\varphi_n(y))^2 dxdy
+2\int_{\Omega}F(\varphi_n)\Big)\nonumber\\
& &+\nu\|\nabla u_n\|^2+\|\nabla\mu_n\|^2+(\nabla(P_n(J\ast\varphi_n)),\nabla\mu_n)\nonumber\\
& &=\int_{\Omega}(\nabla J\ast\varphi_n)\cdot\nabla\mu_n+(h_n,u_n).
\label{est1}
\end{eqnarray}
Now, it is easy to see that
\begin{eqnarray}
& &\|\nabla(P_n(J\ast\varphi_n))\|\leq\|B^{1/2}P_n(J\ast\varphi_n)\|\nonumber\\
& &\leq\|\nabla J\ast\varphi_n\|+\|J\ast\varphi_n\|\leq\|J\|_{W^{1,1}}\|\varphi_n\|,\label{w1}
\end{eqnarray}
and that, by means of (H3), we have
\begin{eqnarray}
& &\frac{1}{2}\int_{\Omega}\int_{\Omega}J(x-y)(\varphi_n(x)-\varphi_n(y))^2 dxdy
+2\int_{\Omega}F(\varphi_n)\nonumber\\
& &=\|\sqrt{a}\varphi_n\|^2+2\int_{\Omega}F(\varphi_n)-(\varphi_n,J\ast\varphi_n)\nonumber\\
& &\geq\int_{\Omega}(a+2c_1-\|J\|_{L^1})\varphi_n^2-2c_2|\Omega|\geq\alpha\|\varphi_n\|^2-c,
\label{w2}
\end{eqnarray}
where $\alpha=2c_1-\|J\|_{L^1}>0$. Hence, integrating
\eqref{est1} with respect to time between $0$ and
$t\in(0,T^{\ast}_n)$ and using \eqref{w1}, \eqref{w2}, we are led
to the following differential inequalities
\begin{eqnarray}
& &\|u_n\|^2+\alpha\|\varphi_n\|^2+\int_0^t(\nu\|\nabla u_n\|^2+\|\nabla\mu_n\|^2)d\tau
\leq c\|J\|_{W^{1,1}}^2\int_0^t\|\varphi_n\|^2 d\tau\nonumber\\
& &+\|u_{0n}\|^2+\frac{1}{2}\int_{\Omega}\int_{\Omega}J(x-y)(\varphi_{0n}(x)-\varphi_{0n}(y))^2 dxdy
\nonumber\\
& &+2\int_{\Omega}F(\varphi_{0n})+\frac{1}{2\nu}\int_0^t \|h_n\|_{V_{div}'}^2d\tau+c\nonumber\\
& &\leq M+\frac{1}{2\nu}\int_0^t\|h\|_{V_{div}'}^2d\tau+c\int_0^t\|\varphi_n\|^2 d\tau,\qquad\forall t\in[0,T^{\ast}_n),\label{stb}
\end{eqnarray}
where $c$ only depends on $\|J\|_{W^{1,1}}$ and on $|\Omega|$, while $M$ is given by
\begin{equation}
M=c\Big(1+\|u_0\|^2+\|\varphi_0\|^2+\int_{\Omega}F(\varphi_0)\Big).
\end{equation}
Here we have used the fact that, since $\varphi_0$ is supposed to
belong to $D(B)$, then we have $\varphi_{0n}\to\varphi_0$ in
$H^2(\Omega)$ and hence also in $L^{\infty}(\Omega)$ (for
$d=2,3$). Since we have $\|u_n(t)\|=|\textbf{a}^{(n)}(t)|$ and
$\|\varphi_n(t)\|=|\textbf{b}^{(n)}(t)|$, by means of Gronwall
lemma we deduce that $T^{\ast}_n=+\infty$, for every $n\geq1$,
i.e., problem \eqref{ap1}-\eqref{ap5} has a unique global in time
solution, and that \eqref{stb} is satisfied for every $t\geq0$.
Furthermore, we obtain the following estimates holding for any given
$0<T<+\infty$
\begin{eqnarray}
& &\|u_n\|_{L^{\infty}(0,T;G_{div})\cap L^2(0,T;V_{div})}\leq N,\label{e1}\\
& &\|\varphi_n\|_{L^{\infty}(0,T;H)}\leq N,\label{e2}\\
& &\|\nabla\mu_n\|_{L^2(0,T;H)}\leq N,\label{e3}
\end{eqnarray}
where
$$N=cM^{1/2}+c\|h\|_{L^2(0,T;V_{div}')},$$
with $c$ now depending also on $T$
and on $\nu$.
From \eqref{ap4}, \eqref{e3} and recalling
\eqref{defp} we now deduce an estimate for $\varphi_n$ in $L^2(0,T;V)$.
We have
\begin{eqnarray}
& &(\mu_n,-\Delta\varphi_n)=(\nabla\mu_n,\nabla\varphi_n)=
(-\Delta\varphi_n,a\varphi_n+F^\prime(\varphi_n)-J\ast\varphi_n)\nonumber\\
& &=(\nabla\varphi_n,a\nabla\varphi_n+\varphi_n\nabla a+
F^{\prime\prime}(\varphi_n)\nabla\varphi_n-\nabla J\ast\varphi_n)\nonumber\\
& &\geq c_0\|\nabla\varphi_n\|^2-2\|\nabla J\|_{L^1}\|\nabla\varphi_n\|\|\varphi_n\|\nonumber\\
& &\geq\frac{c_0}{2}\|\nabla\varphi_n\|^2-k\|\varphi_n\|^2,
\end{eqnarray}
where $k=(2/c_0)\|\nabla J\|_{L^1}^2$ and where we have used (H2). Since
$$(\nabla\mu_n,\nabla\varphi_n)\leq\frac{c_0}{4}\|\nabla\varphi_n\|^2+\frac{1}{c_0}\|\nabla\mu_n\|^2,$$
we get
\begin{equation}
\|\nabla\mu_n\|^2\geq\frac{c_0^2}{4}\|\nabla\varphi_n\|^2-c\|\varphi_n\|^2,\label{y}
\end{equation}
and \eqref{e2}, \eqref{e3}, \eqref{y} yield
\begin{equation}
\|\varphi_n\|_{L^2(0,T;V)}\leq N.\label{e5}
\end{equation}
The next step is to deduce an estimate for the sequence of $\mu_n$
in $L^2(0,T;V)$. To this aim we first observe that (H4) implies that
$|F^\prime(s)|\leq c|F(s)|+c$ for every $s\in\mathbb{R}$ and
therefore we have
\begin{eqnarray}
& &\Big|\int_{\Omega}\mu_n\Big|=|(\mu_n,1)|=|(a\varphi_n+F^\prime(\varphi_n)-J\ast\varphi_n,1)|\nonumber\\
& &\leq\int_{\Omega}|F^\prime(\varphi_n)|+c\leq c\int_{\Omega}|F(\varphi_n)|+c\leq N,\label{e4}
\end{eqnarray}
since we have $\|F(\varphi_n)\|_{L^{\infty}(0,T;L^1(\Omega))}\leq N$ due to \eqref{est1}
(integrated in time between $0$ and $t\in[0,T]$) and \eqref{w2}. We have also used the estimates \eqref{e1}-\eqref{e3}. Hence, by means of the Poincar\'{e}-Wirtinger inequality,
from \eqref{e3} and \eqref{e4} we get
\begin{equation}
\|\mu_n\|_{L^2(0,T;V)}\leq N.\label{e6}
\end{equation}
We also need an estimate for the sequence
$\{\rho(\cdot,\varphi_n)\}$. From (H4) we immediately get
\begin{equation}
 \|\rho(\cdot,\varphi_n)\|_{L^p}\leq(c\|a\|_{L^{\infty}}\|\varphi_n\|+\|F^\prime(\varphi_n)\|_{L^p})\nonumber\\
 \leq c\Big(\Big(\int_{\Omega}|F(\varphi_n)|\Big)^{1/p}+1\Big)\leq N,\label{e9b}
\end{equation}
and hence we have
\begin{equation}
\|\rho(\cdot,\varphi_n)\|_{L^{\infty}(0,T;L^p(\Omega))}\leq N.\label{wg1}
\end{equation}

% % % % % % % % % % % % % % % % % % % % % % % % % % % % % % % % % % % % % % % % % % % % % % % % % % % % % % %

%\item \textbf{Estimates for the sequences of $\varphi_n'$ and
%  $u_n'$} $\\$ $\\$

    The final estimates we need are for the
    sequences of time derivatives $u_n'$ and $\varphi_n'$. Let us
    start from the sequence of $u_n'$. Equation \eqref{ap2} can be
    written as
\begin{equation}
u_n'+\nu Au_n+\widetilde{P}_n\mathcal{B}(u_n,u_n)=-\widetilde{P}_n(\varphi_n\nabla\mu_n)+\widetilde{P}_n h_n\label{td1}
\end{equation}
We now have, for $d=3$, by using Sobolev embeddings, interpolation
between $L^p$ spaces and \eqref{e2}
\begin{equation}
 \|\widetilde{P}_n(\varphi_n\nabla\mu_n)\|_{V_{div}'}\leq c\|\varphi_n\|_{L^3}\|\nabla\mu_n\|
 \leq c\|\varphi_n\|^{1/2}\|\varphi_n\|_{L^6}^{1/2}\|\nabla\mu_n\|\leq N^{1/2}\|\varphi_n\|_V^{1/2}\|\nabla\mu_n\|.
 \label{td1bis}
\end{equation}
Therefore, thanks to \eqref{e3} and \eqref{e5}, we get
\begin{equation}
\|\widetilde{P}_n(\varphi_n\nabla\mu_n)\|_{L^{4/3}(0,T;V_{div}')}\leq N^2.\label{td2}
\end{equation}
For the case $d=2$, by means of Gagliardo-Nirenberg interpolation inequality in dimension 2 we have,
for every $0<\gamma<1$
\begin{eqnarray}
& &\|\widetilde{P}_n(\varphi_n\nabla\mu_n)\|_{V_{div}'}\leq c\|\varphi_n\|_{L^{2+\gamma/(1-\gamma)}}\|\nabla\mu_n\|
\nonumber\\
& &\leq c\|\varphi_n\|^{2(1-\gamma)/(2-\gamma)}\|\varphi_n\|_V^{\gamma/(2-\gamma)}\|\nabla\mu_n\|\nonumber\\
& &\leq N^{2(1-\gamma)/(2-\gamma)}\|\varphi_n\|_V^{\gamma/(2-\gamma)}\|\nabla\mu_n\|,\label{td33}
\end{eqnarray}
so that \eqref{e3} and \eqref{e5} yield
\begin{equation}
\|\widetilde{P}_n(\varphi_n\nabla\mu_n)\|_{L^{2-\gamma}(0,T;V_{div}')}\leq N^2.\label{td4}
\end{equation}
Moreover, we have $\|Au_n\|_{V_{div}'}=\|u_n\|_{V_{div}}$,
while the treatment of the term $\widetilde{P}_n
\mathcal{B}(u_n,u_n)$ is classical and, by means of \eqref{ftril2}
and \eqref{ftril3} we have
\begin{eqnarray}
& &\|\widetilde{P}_n \mathcal{B}(u_n,u_n)\|_{V_{div}'}\leq c\|u_n\|^{1/2}\|u_n\|_{V_{div}}^{3/2},
\qquad\mbox{for }d=3,\label{td3}\\
& &\|\widetilde{P}_n \mathcal{B}(u_n,u_n)\|_{V_{div}'}\leq
c\|u_n\|\|u_n\|_{V_{div}},\qquad\mbox{for }d=2.\label{td5}
\end{eqnarray}
Hence, by using \eqref{td2}, \eqref{td4} and \eqref{td3}, \eqref{td5},
and recalling that $\widetilde{P}_n\in\mathcal{L}(V_{div}',V_{div}')$, which implies that
$$\|\widetilde{P}_n h_n\|_{L^2(0,T;V_{div}')}\leq c(1+\|h\|_{L^2(0,T;V_{div}')}),$$
from \eqref{td1} we obtain
\begin{eqnarray}
& &\|u_n'\|_{L^{4/3}(0,T;V_{div}')}\leq L,\qquad\mbox{for }d=3,\label{e7}\\
& &\|u_n'\|_{L^{2-\gamma}(0,T;V_{div}')}\leq L,\qquad\forall\gamma\in(0,1),\quad\mbox{for }d=2,\label{wg2}
\end{eqnarray}
where $L=N^2+N$.

In order to derive an estimate for the sequence of $\varphi_n'$, we aim to take the test
function $\psi\in V_s$ in \eqref{ap1}, where $s\geq 2$ is such that $\Delta\psi\in H^{s-2}(\Omega)
\hookrightarrow L^{p'}(\Omega)$ ($p'$ is the conjugate index to $p$). Since $H^{s-2}\hookrightarrow L^{p^{\ast}}$,
where $p^{\ast}=2d/(d+4-2s)$, we see that it is enough to take
\begin{equation}
s\geq\frac{(4-d)p+2d}{2p}.\label{Vs}
\end{equation}
Let us now decompose $\psi$ as
$$\psi=\psi_{I}+\psi_{II},$$
where $\psi_{I}=P_n\psi=\sum_{k=1}^n(\psi,\psi_k)\psi_k\in\Psi_n$ and $\psi_{II}=(I-P_n)\psi=\sum_{k=n+1}^{\infty}(\psi,\psi_k)\psi_k\in\Psi_n^{\perp}$ (recall that $\psi_{I}$ and
$\psi_{II}$ are orthogonal in all the Hilbert spaces $V_r$, for every $0\leq r\leq s$), and notice that we have,
due to \eqref{e9b}
\begin{eqnarray}
& &|(\nabla \rho(\cdot,\varphi_n),\nabla\psi_{I})|=|(\rho(\cdot,\varphi_n),\Delta\psi_{I})|\nonumber\\
& &\leq N\|\Delta\psi_{I}\|_{L^{p'}}\leq N\|\psi_{I}\|_{V_s}\nonumber\\
& &\leq N\|\psi\|_{V_s}.\label{derphi1}
\end{eqnarray}
Furthermore, it is easy to see that
\begin{eqnarray}
& &\Big|\int_{\Omega}(\nabla J\ast\varphi_n)\cdot\nabla\psi_{I}\Big|\leq c\|\nabla J\|_{L^1}\|\varphi_n\|\|\psi\|_{V_s}\leq N\|\psi\|_{V_s}.
\end{eqnarray}
As far as the first term in the right hand side of \eqref{ap1} (written with $\psi=\psi_{I}$) is concerned
we notice that $\nabla\psi_{I}\in H^{s-1}(\Omega)$.
Therefore, when $1<p<d/(d-1)$ and $s=((4-d)p+2d)/2p$ or $p=d/(d-1)$ and $s>((4-d)p+2d)/2p=(d+2)/2$,
due to the embedding $H^{s-1}\hookrightarrow L^{\infty}$, we have
\begin{equation}
\Big|\int_{\Omega}(u_n\cdot\nabla\psi_{I})\varphi_n\Big|\leq c\|u_n\|\|\varphi_n\|\|\psi\|_{V_s}\leq N^2\|\psi\|_{V_s}.\label{derphi0}
\end{equation}
When $p=d/(d-1)$ and $s=((4-d)p+2d)/2p=(d+2)/2$, due to the
embedding $H^{s-1}\hookrightarrow L^q$ for every $1\leq
q<+\infty$ and interpolation in $L^p$ spaces, we have, for every
$r\geq 2$, that
\begin{eqnarray}
& &
\Big|\int_{\Omega}(u_n\cdot\nabla\psi_{I})\varphi_n\Big|\leq c\|u_n\|\|\psi\|_{V_s}\|\varphi_n\|_{L^{2r/(r-1)}}
\nonumber\\
& &\leq c\|u_n\|\|\psi\|_{V_s}\|\varphi_n\|^{(r-2)/r}\|\varphi_n\|_{L^4}^{2/r}
\leq N^{2/r'}\|\psi\|_{V_s}\|\varphi_n\|_V^{2/r}.
\end{eqnarray}
Finally, in the case $d=3$, when $3/2<p\leq 2$ and
$s=((4-d)p+2d)/2p=(p+6)/2p$, due to the embedding
$H^{s-1}\hookrightarrow L^{3p/(2p-3)}$, we obtain
\begin{eqnarray}
& &\Big|\int_{\Omega}(u_n\cdot\nabla\psi_{I})\varphi_n\Big|\leq c\|u_n\|\|\psi\|_{V_s}\|\varphi_n\|_{L^{6p/(6-p)}}
\nonumber\\
& &\leq c\|u_n\|\|\psi\|_{V_s}\|\varphi_n\|^{(3-p)/p}\|\varphi_n\|_{L^6}^{(2p-3)/p}\nonumber\\
& &\leq N^{3/p}\|\psi\|_{V_s}\|\varphi_n\|_V^{(2p-3)/p}.\label{derphi2}
\end{eqnarray}
Collecting \eqref{derphi1}-\eqref{derphi2}, from \eqref{ap1} (written with $\psi=\psi_{I}$)
we then get
\begin{eqnarray}
& &\|\varphi_n'\|_{L^{\infty}(0,T;V_s')}\leq L\quad\mbox{if }1<p<d',\:
s=\frac{(4-d)p+2d}{2p},\label{dertphi1}\\
& &\|\varphi_n'\|_{L^{\infty}(0,T;V_s')\cap L^r(0,T;V_{\frac{d+2}{2}}')}\leq L\quad\mbox{if }p=d',\:
s>\frac{d+2}{2},\: r\geq 2,\label{dertphi2}
\end{eqnarray}
where $d'=d/(d-1)$, while in the case $d=3$, if $3/2<p\leq 2$ we find
\begin{equation}
\|\varphi_n'\|_{L^{2p/(2p-3)}(0,T;V_s')}\leq L,\quad  s=\frac{p+6}{2p},\label{dertphi3}
\end{equation}
where $L=N+N^2$ in all cases.

% % % % % % % % % % % % % % % % % % % % % % % % % % % % % % % % % % % % % % % % % % % % % % % % % % %
%\item \textbf{Extraction of strongly converging subsequences and
%passage to the limit} $\\$ $\\$
From the estimates
    \eqref{e1}-\eqref{e3}, \eqref{e5}, \eqref{e6}, \eqref{wg1},
    \eqref{e7}, \eqref{wg2}, \eqref{dertphi1}-\eqref{dertphi3}
    and on account of the compact embeddings
\begin{eqnarray}
& & L^2(0,T;V)\cap H^1(0,T;V_s')\hookrightarrow\hookrightarrow L^2(0,T;H),\nonumber\\
& & L^2(0,T;V_{div})\cap W^{1,q}(0,T;V_{div}')\hookrightarrow\hookrightarrow L^2(0,T;G_{div}),
\quad\forall q>1\nonumber
\end{eqnarray}
we deduce that there exist
\begin{eqnarray}
& & u\in L^{\infty}(0,T;G_{div})\cap L^2(0,T;V_{div}),\label{p1}\\
& & \varphi\in L^{\infty}(0,T;H)\cap L^2(0,T;V),\\
& & \mu\in L^2(0,T;V),\\
& & \rho\in L^{\infty}(0,T;L^p(\Omega))
\end{eqnarray}
with
\begin{eqnarray}
& &u_t\in L^{4/3}(0,T;V_{div}'),\qquad\mbox{if }d=3,\\
& &u_t\in L^{2-\gamma}(0,T;V_{div}'),\qquad\forall\gamma\in(0,1),\quad\mbox{if }d=2,\nonumber
\end{eqnarray}
and
\begin{eqnarray}
& &\varphi_t\in L^{\infty}(0,T;V_s'),
\quad\mbox{if }1<p<d',\:
s=\frac{(4-d)p+2d}{2p},\label{p2}\\
& &\varphi_t\in L^{\infty}(0,T;V_s')\cap L^r(0,T;V_{\frac{d+2}{2}}'),\quad\mbox{if }p=d',\:\:
s>\frac{d+2}{2},\:\: r\geq 2,\nonumber\\
& &\varphi_t\in L^{2p/(2p-3)}(0,T;V_s'),\quad\mbox{if }d=3,\:\: 3/2<p\leq 2,\:\: s=\frac{p+6}{2p},
\nonumber
\end{eqnarray}
such that, for a not relabeled subsequence, we deduce
\begin{eqnarray}
& & u_n\rightharpoonup u\quad\mbox{weakly}^{\ast}\mbox{ in } L^{\infty}(0,T;G_{div}),\label{c1}\\
& & u_n\rightharpoonup u\quad\mbox{weakly in }L^2(0,T;V_{div}),\label{c2}\\
& & u_n\to u\quad\mbox{strongly in }L^2(0,T;G_{div}),\quad\mbox{a.e. in }\Omega\times(0,T),\label{c3}\\
& & u_n'\rightharpoonup u_t\quad\mbox{weakly in }L^{4/3}(0,T;V_{div}'),\qquad d=3,\label{c4}\\
& & u_n'\rightharpoonup u_t
\quad\mbox{weakly in }L^{2-\gamma}(0,T;V_{div}'),\quad\forall\gamma\in(0,1),\:\: d=2,\label{yy}\\
& & \varphi_n\rightharpoonup\varphi\quad\mbox{weakly}^{\ast}\mbox{ in }L^{\infty}(0,T;H),\label{c5}\\
& & \varphi_n\rightharpoonup\varphi\quad\mbox{weakly in }L^2(0,T;V),\label{c6}\\
& & \varphi_n\to\varphi\quad\mbox{strongly in }L^2(0,T;H),\quad\mbox{a.e. in }\Omega\times(0,T),\label{c7}\\
& & \mu_n\rightharpoonup\mu\quad\mbox{weakly in }L^2(0,T;V),\label{c9}\\
& & \rho(\cdot,\varphi_n)\rightharpoonup \rho\quad\mbox{weakly}^{\ast}
\mbox{ in }L^{\infty}(0,T;L^p(\Omega)),\label{c10}
\end{eqnarray}
and
\begin{eqnarray}
& &\varphi_n'\rightharpoonup\varphi_t\quad\mbox{weakly}^{\ast}\mbox{ in }L^{\infty}(0,T;V_s'),
\end{eqnarray}
if $1<p<d'$, with $s=((4-d)p+2d)/2p$,
\begin{eqnarray}
& &\varphi_n'\rightharpoonup\varphi_t\:\mbox{weakly}^{\ast}\mbox{ in }L^{\infty}(0,T;V_s'),
\:\mbox{weakly in }L^r(0,T;V_{\frac{d+2}{2}}'),
\end{eqnarray}
if $p=d'$, with $s>(d+2)/2$ and $r\geq 2$,
\begin{eqnarray}
& &\varphi_n'\rightharpoonup\varphi_t\quad\mbox{weakly in }L^{2p/(2p-3)}(0,T;V_s'),
\end{eqnarray}
if $d=3$ and $3/2<p\leq 2$, with $s=(p+6)/2p$.

We can now pass to the limit in  \eqref{ap1}-\eqref{ap5} in order to
prove that the functions $u$ and $\varphi$ yield a weak solution to
Problem \textbf{P} in the sense of Definition 1, i.e., $u$, $\varphi$,
$\mu$ and $\rho$ satisfy \eqref{mu}, \eqref{h} and \eqref{wf1},
\eqref{wf2}, \eqref{ic}. First of all, from the pointwise convergence
\eqref{c7}  we have $\rho(\cdot,\varphi_n)\to
a\varphi+F^\prime(\varphi)$ almost everywhere in
$\Omega\times(0,T)$ and therefore from \eqref{c10} we have
$\rho=a\varphi+F^\prime(\varphi)$, i.e. \eqref{h}. Moreover, since
$\mu_k=P_k(\rho(\cdot,\varphi_k)-J\ast\varphi_k)$, we have, for
every $v\in\Psi_n$ and every $k\geq n$ ($n$ is fixed)
$$\int_0^T(\mu_k(t),v)\chi(t)dt=\int_0^T(\rho(\cdot,\varphi_k)-J\ast\varphi_k,v)\chi(t)dt,\quad\forall\chi\in C_0^{\infty}(0,T).$$
By passing to the limit as $k\to\infty$ in this identity and using the
convergences \eqref{c9}, \eqref{c7} (which implies
$J\ast\varphi_k\to J\ast\varphi$ strongly in $L^2(0,T;V)$) and
\eqref{c10}, on account of the density of $\{\Psi_n\}_{n\geq 1}$
in $H$ we get
$\mu=\rho-J\ast\varphi=a\varphi+F^\prime(\varphi)-J\ast\varphi$,
i.e. \eqref{mu}. In particular we obtain $\rho\in L^2(0,T;V)$.

The argument used to recover \eqref{wf1} and \eqref{wf2} by
passing to the limit in \eqref{ap1} and \eqref{ap2} of the
approximate problem and by exploiting the above convergences is
standard and we only limit ourselves to give a sketch of it. We multiply
\eqref{ap1} by $\chi$ and \eqref{ap2} by $\omega$, where $\chi$,
$\omega\in C_0^{\infty}(0,T)$ and integrate in time between 0 and
$T$. Due to the above convergences  we can pass to the limit in these
equations. In particular the term $(\nabla
\rho(\cdot,\varphi_n),\nabla\psi)$ can be rewritten as
$(\rho(\cdot,\varphi_n),-\Delta\psi)$ and \eqref{c10} is used. We
also recall that in the nonlinear term $b(u_n,u_n,w)\omega$ we
exploit the strong convergence \eqref{c3} to pass to the limit. The
limit equations thus obtained hold for every $\psi\in\Psi_n$, every
$w\in\mathcal{W}_n$ (where $n$ is fixed) and every $\chi$,
$\omega\in C_0^{\infty}(0,T)$. The density of $\{\Psi_n\}_{n\geq
1}$ and $\{\mathcal{W}_n\}_{n\geq 1}$ in $V_s$ and $V_{div}$,
respectively, allows us to conclude that $u$, $\varphi$, $\mu$ and
$\rho$ satisfy \eqref{wf1} for every $\psi\in V_s$ and \eqref{wf2}
for every $v\in V_{div}$. Furthermore,
observe that \eqref{wf1} can be written in the form
\begin{equation}
\langle\varphi_t,\psi\rangle=-(\nabla\mu,\nabla\psi)+(u,\varphi\nabla\psi),\label{phivar}
\end{equation}
and consider the contribution of the transport term in \eqref{phivar}. In the case $d=3$,
by arguing as in \eqref{td1bis} we have
\begin{eqnarray}
& &|(u,\varphi\nabla\psi)|\leq N^{1/2}\|\nabla u\|\|\varphi\|_V^{1/2}\|\nabla\psi\|,\label{d=3}
\end{eqnarray}
while, in the case $d=2$, by arguing as in \eqref{td33} we have
\begin{eqnarray}
& &|(u,\varphi\nabla\psi)|\leq
N^{2(1-\delta)/(2-\delta)}\|\nabla u\|\|\varphi\|_V^{\delta/(2-\delta)}\|\nabla\psi\|,\label{d=2}
\end{eqnarray}
for every $\delta\in(0,1)$.
From \eqref{d=3} and \eqref{d=2} we deduce that $\varphi_t(t)$ can be continuously extended to $V$ for
almost any $t>0$ and from these equations and \eqref{phivar} we also infer that
\begin{eqnarray}
& &\varphi_t\in L^{4/3}(0,T;V'),\quad\mbox{if}\quad d=3;\qquad\varphi_t\in L^{2-\delta}(0,T;V'),
\quad\forall\delta\in(0,1),\quad\mbox{if}\quad d=2.\nonumber
\end{eqnarray}
We hence get \eqref{df5}, \eqref{df6} and furthermore, \eqref{wf1} and \eqref{phivar} hold also for every
$\psi\in V$.

%%%%%%%%%%%%%%%%%%%%%%%%%%%%%%%%%%%%

Finally, in order to get \eqref{ic}, it is enough to integrate \eqref{ap1}, \eqref{ap2}
between $0$ and $t$ and pass to the limit for $n\to\infty$ by using the weak
convergences above. By integrating between $t_0$ and $t$ we prove the weak
continuity of $u$ and $\varphi$ in $G_{div}$ and $H$, respectively.
$\\$
% % % % % % % % % % % % % % % % % % % % % % % % % % % % % % % % % % % % % % % % % % % % % % % % % % % % %
%\item \textbf{Energy inequality} $\\$ $\\$

We now prove that the
    energy inequality \eqref{ei} holds for the weak solution
    $[u,\varphi]$ corresponding to the initial data $u_0\in G_{div}$
    and $\varphi_0\in D(B)$. To this aim let us first observe that, for
    almost any $t\in(0,T)$ and for a not relabeled subsequence we
    have
\begin{eqnarray}
& &u_n(t)\to u(t)\qquad\mbox{strongly in  }G_{div},\label{wq1}\\
& &\varphi_n(t)\to\varphi(t)\qquad\mbox{strongly in  }H\mbox{ and a.e. in  }\Omega.
\end{eqnarray}
and that, by means of (H3) and of Fatou's lemma we have
\begin{equation}
\int_{\Omega}F(\varphi(t))\leq\liminf_{n\to\infty}\int_{\Omega}F(\varphi_n(t)).\label{wq11}
\end{equation}
In addition, it is easy to see that
\begin{equation}
P_n(J\ast\varphi_n)\to J\ast\varphi\qquad\mbox{in }L^2(0,T;V),\label{wq2}
\end{equation}
as a consequence of the convergence $J\ast\varphi_n\to J\ast\varphi$
strongly in $L^2(V)$ and
of the fact that $P_n\in\mathcal{L}(V,V)$.
Hence, by integrating \eqref{est1} between 0 and $t$, and by passing to the limit
using \eqref{wq1}-\eqref{wq2},
the weak convergences \eqref{c2}, \eqref{c9} and the weak lower semicontinuity of the norm,
we immediately get \eqref{ei}.
%\end{itemize}

% % % % % % % % % % % % % % % % % % % % % % % % % % % % % % % % % % % % % % % % % % % % % % % % % % %

In order to complete the proof of the theorem we now assume that $u_0\in G_{div}$ and that
$\varphi_0\in H$ such that $F(\varphi_0)\in L^1(\Omega)$.
For every $k\in\mathbb{N}$ let us define $\varphi_{0k}\in D(B)$ as
$$\varphi_{0k}:=\Big(I+\frac{1}{k}B\Big)^{-1}\varphi_0.
$$
Since $B$ is maximal and monotone, we have $\varphi_{0k}\to\varphi_0$ in $H$.
Let $[u_k,\varphi_k]$ be a weak solution corresponding to $u_0$ and $\varphi_{0k}$
and satisfying \eqref{df1}-\eqref{ic}.
We know that $[u_k,\varphi_k]$ satisfies the energy inequality \eqref{ei} for each $k$,
on the right hand side of which we need to control the nonlinear term that,
by virtue  \eqref{convex}, can be written
as
\begin{equation}
\int_{\Omega}F(\varphi_{0k})=\int_{\Omega}G(\varphi_{0k})-\frac{a^{\ast}}{2}\|\varphi_{0k}\|^2.\label{nl}
\end{equation}
To this aim we multipy the equation $\varphi_{0k}-\varphi_0=-\frac{1}{k}B\varphi_{0k}$
by $g(\varphi_{0k})$ in $L^2(\Omega)$, where $g=G'$. We obtain
\begin{eqnarray}
& &\int_{\Omega}g(\varphi_{0k})(\varphi_{0k}-\varphi_0)=-\frac{1}{k}\int_{\Omega}g(\varphi_{0k})B\varphi_{0k}
\nonumber\\
& &=-\frac{1}{k}\int_{\Omega}g'(\varphi_{0k})|\nabla\varphi_{0k}|^2-\frac{1}{k}\int_{\Omega}g(\varphi_{0k})
\varphi_{0k}\leq 0,
\end{eqnarray}
since $g$ is monotone nondecreasing and we can suppose that $g(0)=0$.
Therefore, due to the convexity of $G$ we can write
\begin{eqnarray}
& & \int_{\Omega}G(\varphi_{0k})\leq\int_{\Omega}G(\varphi_0)+\int_{\Omega}g(\varphi_{0k})
(\varphi_{0k}-\varphi_0)\leq\int_{\Omega}G(\varphi_0).\label{u}
\end{eqnarray}
Hence, on account of \eqref{nl} and \eqref{u}  we get the desired
control and from \eqref{ei}, written for each weak solution
$[u_k,\varphi_k]$, by means of (H3) and of Gronwall lemma, we
deduce the estimates \eqref{e1}, \eqref{e2} and \eqref{e3} for
$u_k$, $\varphi_k$ and $\nabla\mu_k$, respectively. By taking the
gradient of
$\mu_k=a\varphi_k-J\ast\varphi_k+F^\prime(\varphi_k)$,
multiplying the resulting relation by $\nabla\varphi_k$ in
$L^2(\Omega)$ and using (H2) we recover the control of the gradient
of $\varphi_k$ from the gradient of $\mu_k$ (see \eqref{y}) and
therefore, for $\varphi_k$ we get the estimate \eqref{e5}. Moreover,
arguing as in the Faedo-Galerkin approximation scheme above we get
\eqref{e6} and \eqref{wg1} for $\mu_k$ and
$\rho(\cdot,\varphi_k)$, and \eqref{e7}, \eqref{wg2} and
\eqref{dertphi1}-\eqref{dertphi3} for the time derivatives $u_k'$
and $\varphi_k'$, respectively. By compactness we hence deduce the
existence of four functions $u$, $\varphi$, $\mu$ and $\rho$
satisfying \eqref{p1}-\eqref{p2} such that the convergences
\eqref{c1}-\eqref{c10} hold. By passing to the limit in the variational
formulation for $[u_k,\varphi_k]$ it is immediate to see that
$[u,\varphi]$ is a solution corresponding to the initial data $u_0$ and
$\varphi_0$. This completes the proof of the existence of a weak
solution when $u_0\in G_{div}$ and $\varphi_0\in H$ such that
$F(\varphi_0)\in L^1(\Omega)$.

Finally, the energy inequality \eqref{ei} for the solution $[u,\varphi]$
can be obtained by passing to the limit in the energy inequality
\eqref{ei} written for each approximating couple $[u_k,\varphi_k]$,
using the weak/strong convergences \eqref{c1}-\eqref{c10} and
Fatou's lemma, in a similar way as done above for the Faedo-Galerkin
approximate solutions (see \eqref{wq1}-\eqref{wq11}). In
particular, on account of \eqref{convex}, when we pass to the limit in
the nonlinear term on the right hand side we have, by \eqref{nl},
\eqref{u} we infer
$$\limsup_{k\to\infty}\int_{\Omega}F(\varphi_{0k})\leq\int_{\Omega}G(\varphi_0)-\frac{a^{\ast}}{2}\|\varphi_0\|^2
=\int_{\Omega}F(\varphi_0).$$
The proof of Theorem \ref{thm} is now complete.

\begin{oss}
\label{smooth} {\upshape If we compare estimates \eqref{e7} and
\eqref{wg2} for the time derivatives $u_n'$ in the case $d=3$ and
$d=2$, respectively, with the analogous estimates that hold in the
case of the local Cahn-Hilliard-Navier-Stokes system (see, e.g.,
\cite{B}), we see that in the case $d=3$ we obtain the same time
regularity exponent $4/3$ for both the local and nonlocal systems.
However, in the local system we can estimate $\varphi_n$ in
$L^{\infty}(0,T;V)$ so that, in two dimensions we easily get the
exponent $2$. For the nonlocal system, this possibility seems out of
reach since we can only estimate $\varphi_n$ in $L^2(0,T;V)$. Also,
for the same reason, the transport term in the Cahn-Hilliard equation
is less regular so that the bound on $\varphi^\prime_n$ is weaker in
comparison with the analog for the local system.}
\end{oss}

\begin{oss}
{\upshape We point out that energy inequality \eqref{ei} can be
written in an alternative form, provided that a suitable condition holds.
Indeed, suppose that
\begin{equation}
\int_{\Omega}{\varphi_0}=0,
\end{equation}
and that $c_0$ (see (H2)) and $J$ are such that
\begin{equation}
C_P<\frac{c_0}{2\|\nabla J\|_{L^1}},\label{altass}
\end{equation}
where $C_P$ is the Poincar\'{e}-Wirtinger constant in the inequality
$$\|\varphi\|\leq C_P\|\nabla\varphi\|,\qquad
\forall\varphi\in V\mbox{ s.t. }\int_{\Omega}\varphi=0.$$
Then, we can get the following control of the gradient of $\varphi$
by the gradient of $\mu$
\begin{equation}
\|\nabla\mu\|^2\geq\beta\|\nabla\varphi\|^2,\label{nei}
\end{equation}
where $\beta=(c_0-2C_P\|\nabla J\|_{L^1})^2$ (compare \eqref{nei} with \eqref{y}).
Indeed, by taking the gradient of $\mu=a\varphi-J\ast\varphi+F'(\varphi)$,
multiplying the resulting relation by $\nabla\varphi$ and using (H2) we have
\begin{eqnarray}
& &\frac{\sqrt{\beta}}{2}\|\nabla\varphi\|^2+\frac{1}{2\sqrt{\beta}}\|\nabla\mu\|^2\geq (\nabla\mu,\nabla\varphi)\nonumber\\
& &\geq c_0\|\nabla\varphi\|^2-2\|\nabla J\|_{L^1}\|\varphi\|\|\nabla\varphi\|\nonumber\\
& &\geq (c_0-2C_P\|\nabla J\|_{L^1})\|\nabla\varphi\|^2=\sqrt{\beta}\|\nabla\varphi\|^2,
\end{eqnarray}
whence \eqref{nei}. Therefore, as a consequence of \eqref{ei},
for the weak solution $[u,\varphi]$ of Theorem \ref{thm} the following
energy inequality is satisfied as well
\begin{eqnarray}
& &\frac{1}{2}\Big(\|u(t)\|^2+
\frac{1}{2}\int_{\Omega}\int_{\Omega}J(x-y)(\varphi(x,t)-\varphi(y,t))^2 dxdy+2\int_{\Omega}F(\varphi(t))
% % %\|\sqrt{p}\varphi(t)\|^2+2\int_{\Omega}F(\varphi(t))-(\varphi(t),J\ast\varphi(t))
\Big)\nonumber\\
& &+\int_0^t(\nu\|\nabla u(\tau)\|^2+\beta\|\nabla\varphi(\tau)\|^2)d\tau\nonumber\\
& &\leq
\frac{1}{2}\Big(\|u_0\|^2+\frac{1}{2}\int_{\Omega}\int_{\Omega}J(x-y)(\varphi_0(x)-\varphi_0(y))^2 dxdy+2\int_{\Omega}F(\varphi_0)
%\frac{1}{2}\Big(\|u_0\|^2+\|\sqrt{p}\varphi_0\|^2+2\int_{\Omega}F(\varphi_0)-(\varphi_0,J\ast\varphi_0)
\Big)\nonumber\\
& &+\int_0^t\langle h(\tau),u(\tau)\rangle d\tau.
\end{eqnarray}
We recall that $C_P$ can be estimated for many important special
classes of domains (cf., e.g., \cite{L}). For example, if $\Omega$ is
convex we can take $C_P=\mbox{diam}(\Omega)/\pi$ and there
exist convex domains for which this constant is optimal (see
\cite{Be})}.
\label{rem9}
\end{oss}

% % % % % % % % % % % % % % % % % % % % % % % % % % % % % % % % % % % % % % % % % % % % % % % % % % %

\section{Proofs of Corollaries \ref{cor} and \ref{cor2}}\setcounter{equation}{0}

\begin{proof}[Proof of Corollary \ref{cor}]
Recalling \eqref{coerc} and repeating the proof of Theorem \ref{thm}, in place of \eqref{w2}
we have
\begin{eqnarray}
& &\frac{1}{2}\int_{\Omega}\int_{\Omega}J(x-y)(\varphi_n(x)-\varphi_n(y))^2 dxdy
+2\int_{\Omega}F(\varphi_n)\nonumber\\
& &=\|\sqrt{a}\varphi_n\|^2+2\int_{\Omega}F(\varphi_n)-(\varphi_n,J\ast\varphi_n)\nonumber\\
& &\geq\int_{\Omega}((a-\|J\|_{L^1})\varphi_n^2+2c_7|\varphi_n|^{2+2q})-2c_8|\Omega|
\geq c\|\varphi_n\|_{L^{2+2q}(\Omega)}^{2+2q}-c,
\end{eqnarray}
and this estimate, by integrating \eqref{est1} as done above, allows to control
the sequence of $\varphi_n$ and yields \eqref{impr0}.
All the other estimates for $\varphi_n$, $u_n$, $\mu_n$ and $\rho(\cdot,\varphi_n)$
established in the proof of Theorem \ref{thm} still hold. The only estimates that can be improved
are the ones for $u_n'$ and $\varphi_n'$. Indeed,
for $d=2$, in place of \eqref{td33} we can write
$$\|\widetilde{P}_n(\varphi_n\nabla\mu_n)\|_{V_{div}'}\leq c\|\varphi_n\|_{L^{2+2q}(\Omega)}\|\nabla\mu_n\|
\leq N\|\nabla\mu_n\|,$$
and hence we can control the sequence of $\widetilde{P}_n(\varphi_n\nabla\mu_n)$ in $L^2(0,T;V_{div}')$.
This control, combined with the control for the other terms in \eqref{td1},
yields \eqref{u_tnew}.
Furthermore, as far as the sequence of $\varphi_n'$ is concerned,
we can improve estimates \eqref{derphi0}-\eqref{derphi2} by arguing as in the proof
of Theorem \ref{thm} and by considering the following cases.
Choosing $s=((4-d)p+2d)/2p$ (cf. \eqref{Vs} and \eqref{derphi1}),
when $1<p\leq d'=d/(d-1)$, due to the embeddings $H^{s-1}(\Omega)\hookrightarrow L^{\infty}(\Omega)$
(if $1<p<d'$) or $H^{s-1}(\Omega)\hookrightarrow L^r(\Omega)$ for every $r<\infty$ (if $p=d'$),
 we have
\begin{eqnarray}
& &\Big|\int_{\Omega}(u_n\cdot\nabla\psi_I)\varphi_n\Big|\leq c\|u_n\|\|\varphi_n\|_{L^{2+2q}(\Omega)}\|\psi\|_{V_s}\leq N\|\psi\|_{V_s}.\label{mod1}
\end{eqnarray}
The same estimate also holds for the case $d=3$ when $3/2<p\leq 2$ and $q\geq 2(2p-3)/(6-p)$,
where here we use the embedding $H^{s-1}(\Omega)\hookrightarrow L^{3p/(2p-3)}(\Omega)$ and the fact that $6p/(6-p)\leq 2+2q$.
Finally, when $d=3$, $3/2<p\leq 2$ and $0<q< 2(2p-3)/(6-p)$ we have
\begin{eqnarray}
& &\Big|\int_{\Omega}(u_n\cdot\nabla\psi_I)\varphi_n\Big|\leq
c\|u_n\|\|\varphi_n\|_{L^{6p/(6-p)}(\Omega)}\|\psi\|_{V_s}\nonumber\\
& &\leq c\|u_n\|
\|\varphi_n\|_{L^{2+2q}(\Omega)}^{2(3-p)(1+q)/p(2-q)}\|\varphi_n\|_{L^6(\Omega)}^{(4p-6q+pq-6)/p(2-q)}
\|\psi\|_{V_s}\nonumber\\
& &\leq N\|\varphi_n\|_V^{(4p-6q+pq-6)/p(2-q)}\label{mod2}
\|\psi\|_{V_s}.
\end{eqnarray}
Hence, on account of \eqref{mod1} and \eqref{mod2}, from \eqref{ap1} (written with $\psi=\psi_{I}$)
we deduce \eqref{impr1} and \eqref{beta}. The improved regularity \eqref{impr2} for $\varphi_t$
can be obtained by estimating the term $(u,\varphi\nabla\psi)$ in \eqref{phivar} for the case $d=2$ as
\begin{equation}
|(u,\varphi\nabla\psi)|\leq c\|\nabla u\|\|\varphi\|_{L^{2+2q}(\Omega)}\|\nabla\psi\|
\leq N\|\nabla u\|\|\nabla\psi\|,\nonumber
\end{equation}
and for the case $d=3$ and $q\geq 1/2$ as
\begin{eqnarray}
& &|(u,\varphi\nabla\psi)|\leq c \|u\|_{L^{2(1+1/q)}(\Omega)}\|\varphi\|_{L^{2+2q}(\Omega)}\|\nabla\psi\|
\leq N\|\nabla u\|\|\nabla\psi\|\nonumber.
\end{eqnarray}
\end{proof}

\begin{proof}[Proof of Corollary \ref{cor2}]
For $d=2$ the regularity properties \eqref{impr2} and \eqref{u_tnew} allow us to deduce
the energy identity for the weak solution. Indeed, in this case we can take and $v=u(\tau)$ in \eqref{wf2} and $\psi=\mu(\tau)$ in \eqref{phivar}, sum the resulting equations and then integrate
with respect to $\tau$ between $0$ and $t$. When we consider the duality product $\langle\varphi_t,\mu\rangle$, we are led
to the duality $\langle\varphi_t,F'(\varphi)\rangle$ which can be rewritten by taking
into account that $F'(\varphi)=g(\varphi)-a^{\ast}\varphi$, with $g\in C^1(\mathbb{R})$ monotone
increasing. Now, introducing the functional $\mathcal{G}:H\to\mathbb{R}\cup\{+\infty\}$
defined as $\mathcal{G}(\varphi)=\int_{\Omega}G(\varphi)$ if $G(\varphi)\in L^1(\Omega)$ and
$\mathcal{G}(\varphi)=+\infty$ otherwise, we have (see \cite[Proposition 2.8, Chap. II]{BAR})
that $\mathcal{G}$ is convex, lower semicontinous on $H$ and $\xi\in\partial\mathcal{G}(\varphi)$
if and only if $\xi=G'(\varphi)=g(\varphi)$ almost everywhere in $\Omega$. In view of \eqref{impr2} and of the fact
that $g(\varphi)\in L^2(0,T;V)$, we can use \cite[Proposition 4.2]{CKRS} and get, for almost any $t\in(0,T)$
\begin{eqnarray}
& &\langle\varphi_t,F'(\varphi)\rangle
=\langle\varphi_t,g(\varphi)\rangle-a^{\ast}\langle\varphi_t,\varphi\rangle
=\frac{d}{dt}\Big(\mathcal{G}(\varphi)-\frac{a^{\ast}}{2}\|\varphi\|^2\Big)
=\frac{d}{dt}\int_{\Omega}F(\varphi).\nonumber\label{chain}
\end{eqnarray}
Therefore, on account of this identity, from \eqref{wf2} and \eqref{phivar} we obtain
\begin{eqnarray}
& &\frac{1}{2}\frac{d}{dt}\Big(\|u\|^2+\|\sqrt{a}\varphi\|^2-(\varphi,J\ast\varphi)
+2\int_{\Omega}F(\varphi)\Big)+\nu\|\nabla u\|^2+\|\nabla\mu\|^2\nonumber\\
& &=\frac{1}{2}\frac{d}{dt}\Big(\|u\|^2+\frac{1}{2}\int_{\Omega}\int_{\Omega}J(x-y)(\varphi(x)-\varphi(y))^2 dxdy
+2\int_{\Omega}F(\varphi)\Big)\nonumber\\
& &+\nu\|\nabla u\|^2+\|\nabla\mu\|^2=\langle h,u\rangle,
\end{eqnarray}
Hence we get \eqref{idendiffcor}. Furthermore,
by integrating between $0$ and $t$ we get the energy identity in integral form, i.e, \eqref{ei}
holds with the equal sign for every $t\geq 0$.

In order to obtain \eqref{dissest},
let us multiply equation $\mu=a\varphi-J\ast\varphi+F'(\varphi)$ by $\varphi$ in $L^2(\Omega)$.
We obtain
\begin{equation}
(\mu,\varphi)=\frac{1}{2}
\int_{\Omega}\int_{\Omega}J(x-y)(\varphi(x)-\varphi(y))^2 dxdy+(F'(\varphi),\varphi).\label{muphi}
\end{equation}
Now, observe that, due to (3.18) and to the convexity of $G$ we have
$$F(0)\geq F(s)+\frac{a^{\ast}}{2}s^2-(F'(s)+a^{\ast}s)s$$
and hence
$$F'(s)s\geq F(s)-\frac{a^{\ast}}{2}s^2-F(0).$$
Therefore, from \eqref{muphi} we get
\begin{eqnarray}
& &(\mu,\varphi)\geq\frac{1}{2}
\int_{\Omega}\int_{\Omega}J(x-y)(\varphi(x)-\varphi(y))^2 dxdy+\int_{\Omega}F(\varphi(t))\nonumber\\
& &-\frac{a^{\ast}}{2}\|\varphi\|^2-c.\label{muphi2}
\end{eqnarray}
Setting $\overline{\mu}=\frac{1}{|\Omega|}\int_{\Omega}\mu$ and suppose $(\varphi_0,1)=0$
for simplicity. Then we have
$$(\mu,\varphi)=(\mu-\overline{\mu},\varphi)\leq C_p\|\nabla\mu\|\|\varphi\|,$$
and then, by means of (H6), from \eqref{muphi2} we have
\begin{eqnarray}
& &\frac{1}{8}\int_{\Omega}\int_{\Omega}J(x-y)(\varphi(x)-\varphi(y))^2 dxdy+\frac{1}{2}\int_{\Omega}F(\varphi)
+\frac{c_7}{2}\int_{\Omega}|\varphi|^{2+2q}-c_9\nonumber\\
& &-\frac{a^{\ast}}{2}\|\varphi\|^2-c\leq\frac{3}{2}\|J\|_{L^1}\|\varphi\|^2+\|\nabla\mu\|^2+\frac{C_P^2}{2}
\|\varphi\|^2.\nonumber
\end{eqnarray}
Therefore, we deduce
\begin{equation}
\frac{1}{8}\int_{\Omega}\int_{\Omega}J(x-y)(\varphi(x)-\varphi(y))^2 dxdy+\frac{1}{2}\int_{\Omega}F(\varphi)
\leq\|\nabla\mu\|^2+c_{10}\nonumber
\end{equation}
and hence
\begin{equation}
\frac{1}{2}\mathcal{E}(u,\varphi)\leq c_{11}\Big(\frac{\nu}{2}\|\nabla u\|^2+\|\nabla\mu\|^2\Big)+c_{10},\label{hr}
\end{equation}
where $c_{11}=\max(1,1/2\lambda_1\nu)$, $\lambda_1$ being the lowest eigenvalue of the Stokes operator $A$.
We point out that all constants only depend on the parameters
of the problem and are independent of the initial data.
Now, by virtue of \eqref{idendiffcor} and \eqref{hr} we have
\begin{equation}
\frac{d}{dt}\mathcal{E}(u,\varphi) + k\mathcal{E}(u,\varphi)\leq l+\frac{1}{2\nu}\|h\|_{V_{div}'}^2,
\end{equation}
where $k=1/2c_{11}$ and $l=c_{10}/c_{11}$.
By means of Gronwall lemma we hence deduce
\begin{equation}
\label{dissest-0}
\mathcal{E}(u(t),\varphi(t))\leq \mathcal{E}(u_0,\varphi_0)e^{-kt}+K.
\end{equation}
with
$$K=\frac{l}{k}+\frac{1}{2\nu}\|h\|^2_{L^2(0,\infty;V_{div}')}.$$
If $m:=(\varphi_0,1)\not=0$, observe that if $[u,\varphi]$ is a weak solution
with data $[u_0,\varphi_0]$ for the problem with potential $F$, then
$[u,\widetilde{\varphi}]$, where $\widetilde{\varphi}=\varphi-m$
is a weak solution with data $[u_0,\varphi_0-m]$ for the same problem with
potential $\widetilde{F}$ given by
$$\widetilde{F}(s):=F(s+m)-F(m).$$
By relying on \eqref{dissest-0} satisfied by the solution $[u,\widetilde{\varphi}]$,
we easily get \eqref{dissest}.
\end{proof}

%\begin{prop}
%Let $\mathcal{G}:H\to\mathbb{R}\cup\{+\infty\}$ be a proper convex lower semicontinuous
%functional and let $\varphi\in L^2(0,T;H)$ be such that $\varphi_t\in L^2(0,T;V')$.
%Let $\xi$ be a selection in the subdifferential $\partial\mathcal{G}(\varphi)$ of $\mathcal{G}$
%at $\varphi$, i.e. $\xi(t)\in\partial\mathcal{G}(\varphi(t))$ for a.e. $t\in(0,T)$, such that $\xi\in L^2(0,T;V)$.
%Then, the function $\mathcal{G}(\varphi(\cdot))$ is absolutely continuous in $[0,T]$ and
%$$\frac{d}{dt}\mathcal{G}(\varphi(t))=\langle\varphi_t(t),\xi(t)\rangle,\qquad\mbox{a.e. }t\in(0,T).$$
%\label{ckrs}
%\end{prop}

\medskip

\noindent {\bf Acknowledgments.} Some financial support from the
Italian MIUR-PRIN Research Project 2008 ``Transizioni di fase,
isteresi e scale multiple'' and from the IMATI of CNR~in~Pavia,~Italy,
is gratefully acknowledged. The second author was also supported by
the FTP7-IDEAS-ERC-StG Grant $\sharp$200497(BioSMA).

%%%%%%%%%%%%%%%%%%%%%%%%%%%%%%%%%%%%%%%%%%%%%%%%%%%%%%%%%%%
\medskip\noindent

%%%%%%%%%%%%%%%%%%%%%%%%%%%%%%%%%%%%%%%%%%%%%%%%%%%%%%%%%%%

\end{document}